\documentclass[11pt]{amsart}
\usepackage{amsfonts,amsmath,latexsym,verbatim,amscd}
\setlength{\topmargin}{-.15in}
\setlength{\textheight}{8.57in}
\setlength{\textwidth}{6.28in}
\setlength{\oddsidemargin}{3.5ex}
\setlength{\evensidemargin}{2ex}
\setlength{\headsep}{.4in}
\setlength{\footskip}{.5in}
\setlength{\parindent}{0pt}
\setlength{\parskip}{3pt}
\usepackage{amsthm,amsmath,amssymb,amscd}
\usepackage{amssymb}
\usepackage{graphics}
\usepackage{bbm}
%%%%%%%%%%%%%%%%%%%%%%%%%%%%%%%%%%%%%%%
%MK's commands
%%%%%%%%%%%%%%%%%%%%%%%%%%%%%%%%%%%%%%%
% mathbb

\newcommand{\bbC}{\mathbb{C}}

\newcommand{\bbP}{\mathbb{P}}

\newcommand{\bbR}{\mathbb{R}}
\newcommand{\bbS}{\mathbb{S}}

\newcommand{\bbZ}{\mathbb{Z}}
%%%%%%%%%%%%%%%%%%%%%%%%%%%%%%%%%%%%%%%%%
% mathcal

\newcommand{\calI}{\mathcal{I}}

\newcommand{\calO}{\mathcal{O}}

\newcommand{\calS}{\mathcal{S}_\bb}

\newcommand{\calU}{\mathcal{U}}

%%%%%%%%%%%%%%%%%%%%%%%%%%%%%%%%%%%%%%%

\newcommand{\HO}{\Hat{O}}

\newcommand{\HS}{\Hat{S}}
\newcommand{\HcS}{\Hat{\mathcal{S}}_\bb}

\newcommand{\HU}{\Hat{U}}
\newcommand{\HV}{\Hat{V}}

\newcommand{\renorm}{\Gamma}
%%%%%%%%%%%%%%%%%%%%%%%%%%%%%%%%%%%%%%
\renewcommand{\Re}{\mathrm{Re}}
\renewcommand{\Im}{\mathrm{Im}}
\renewcommand{\aa}{\mathfrak{a}}
\newcommand{\bb}{\mathfrak{b}}

\newcommand{\unity}{{1\!\!\!\:\mathrm{l}}}
\newcommand{\bloch}{\mathcal{B}}

\newcommand{\pol}{\mathcal{P}_{\aa\bb}}
\newcommand{\reg}{\mathcal{R}_{\aa\bb}}
\newcommand{\ct}{\mathcal{T}_{\aa\bb}}

\newcommand{\banach}[1]{L^{\!#1}}
\newcommand{\ci}{i}
%%%%%%%%%%%%%%%%%%%%%%%%%%%%%%%%%%%
\DeclareMathOperator{\resultant}{resultant}
\DeclareMathOperator{\disc}{Disc}

%%%%%%%%%%%%%%%%%%%%%%%%%%%%%%%%%%%%%%%
%%%%%%%%
%%%%%%%%
\newcommand{\R}{{\mathbb R}}

\newcommand{\N}{{\mathbb N}}
\newcommand{\C}{{\mathbb C}}

\newcommand{\tr}{\mathrm{tr}}
%\newcommand{\res}{\xi_{\mbox{\tiny{$-1$}}}}
%================================================================
% THEOREMS
%================================================================
% theorems
\theoremstyle{plain}
\newtheorem{theorem}{Theorem}
\newtheorem*{theorem*}{Theorem}

\newtheorem*{corollary*}{Corollary}

\newtheorem*{proposition*}{Proposition}
\newtheorem{lemma}{Lemma}
\newtheorem*{lemma*}{Lemma}

\newtheorem*{example*}{Example}
\newtheorem{definition}{Definition}
\newtheorem*{definition*}{Definition}

\newtheorem*{notation*}{Notation}
\newtheorem{remark}{Remark}
\newtheorem*{remark*}{Remark}
%\numberwithin{equation}{section}
\title[Singularites of Whitham flow for hyperelliptic curves]{Singularities of Whitham flows for hyperelliptic spectral curves}
\author{L. Hauswirth \and M. Kilian \and M.U. Schmidt}

\address{\tiny L. Hauswirth, Universit Paris-Est, LAMA (UMR 8050), UPEMLV, UPEC, CNRS, F-77454, Marne-la-Valle, France }
\email{hauswirth@univ-mlv.fr}

\address{\tiny M. Kilian, School of Mathematical Sciences, University College Cork, Ireland.}
\email{m.kilian@ucc.ie}

\address{\tiny M.U. Schmidt, Institut f\"ur Mathematik, Universit\"at Mannheim, A5, 6, 68131 Mannheim, Germany.}
\email{schmidt@math.uni-mannheim.de}

\begin{document}
\begin{abstract} 
We consider the Whitham equations for deformations of hyperelliptic spectral curves, which preserve all periods of a meromorphic differential. If the meromorphic differential has a root at a fixed point of the hyperelliptic involution, then the Whitham flow has a singularity. We prove that the stable and unstable manifolds are non-empty and extend the Whitham flow continuously through the singularity.
\end{abstract}
\thanks{{\it Mathematics Subject Classification. }37K10. \today}
\maketitle
%\tableofcontents
\section{Introduction}
In 1968, Peter Lax~\cite{L} reformulated the KdV equation (see~\cite[(2.1)-(2.2)]{DKN})
$$u_t=6uu_x-u_{xxx}=0$$
for a real or complex function $u$ as
\begin{align}\label{eq:lax}
\partial_t L&=[A,L]&\mbox{where}&&L&=-\partial_x^2+u&\mbox{and}&&A&=4\partial_x^3-3u\partial_x-3\partial_xu.
\end{align}
Writing the KdV equation as a {\it Lax pair} \eqref{eq:lax} makes apparent that the spectrum of the one-dimensional Schr\"odinger operator $L$ is preserved by the time evolution of  $u$. This observation underpinned the development of the {\it spectral curve correspondence}, whereby periodic solutions of the KdV equation are described in terms of spectral curve data. This is comprised of a subvariety of $\mathbb{C}^2$ (the spectral curve), a holomorphic line bundle, a marked point and a meromorphic differential with prescribed poles and periods. Of particular interest are the finite type solutions, for which the spectral curve is hyperelliptic. In particular, the periodic finite type solutions are in one-to-one correspondence to pairs of hyperelliptic curves with holomorphic line bundles. Here we are mainly interested in the moduli spaces of all hyperelliptic curves, which are the spectral curve of a periodic solution of the KdV equation or a similar integrable system. It is now considered a hallmark of an integrable system that it may be formulated as a Lax pair and the spectral curve correspondence has been extended to many other nonlinear partial differential equations. The integrable systems with hyperelliptic spectral curves include besides the KdV equation the nonlinear Schr\"odinger (NLS) equation and the sinh-Gordon equation. For these integrable systems the Whitham equations define vector fields on the corresponding moduli spaces \cite{grinevich-schmidt1995}. These vector fields define on the moduli space local coordinates, which were discovered by Marchenko and Ostrovskii~\cite{MO}. Under special circumstances these vector fields have singularities, and our main result extends their integral curves through these singularities. An important application of this extension of the Whitham flow through singularities is when the finite type solutions account for all solutions. For example all immersed constant mean curvature 2-tori in 3-dimensional space forms, and all properly embedded minimal anuli in $\bbS^2\times\R$ are of finite type. In such cases the integral curves sweep out the moduli spaces, and the Whitham flow can be used to obtain classification results~\cite{hks2, hks3, KSS}.
\subsection {Outline of Paper} 
The primary motivation for studying deformations of a hyperelliptic curve with a meromorphic differential comes from the examples to which our theory applies. Hence we begin in Section~\ref{sec:hyperelliptic} to present the main examples of integrable systems with hyperelliptic spectral curves. First we describe in Section~\ref{sec:kdv} the spectral curves of periodic solutions of the KdV equation. In the two subsequent Sections~\ref{sec:nls}-\ref{sec:sinh} the modifications for periodic solutions of the NLS and the sinh-Gordon equation are explained. In Section~\ref{sec:spectral} we present a general framework for hyperelliptic spectral curves which includes the three foregoing examples. The Whitham flow is introduced in Section~\ref{abdeformation}. We identify the singularities and show in Theorem~\ref{thm:deformation} that the flow is smooth away from them. The two remaining sections investigate the singularities. First we show in Theorem~\ref{smooth param} (Section~\ref{sec:param}) that neighbourhoods around the singularities can be embedded into smooth higher-dimensional moduli spaces. Finally we extend in the main Theorem~\ref{commonroots} (Section~\ref{sec:common root}) the Whitham flow through the singularities.
\section{Integrable systems with hyperelliptic spectral curves}\label{sec:hyperelliptic}
\subsection{Spectral curves of periodic solutions of the KdV equation}\label{sec:kdv}
For periodic solutions $u$ of the KdV equation the spectral curve is the Bloch curve of the Lax operator $L$~\eqref{eq:lax}. Let $T$ be the operator acting as $x$-translation by the period $\gamma$ and note that $L$ and $T$ commute. Then the Bloch curve $\bloch $ is the set of pairs $(\lambda,\mu)\in\mathbb{C}^2 $ such that $L$ and $T$  have a non-trivial common eigenfunction $\psi$ with eigenvalues $\lambda$ and $\mu$, respectively:
\begin{align*}
L\psi(x)&=\lambda\psi(x),&T\psi(x)&=\psi(x+\gamma)=\mu\psi(x).
\end{align*}
Instead of the Lax equation, we use a zero curvature equation introduced by Novikov~\cite{NMPZ} which clarifies the analogies between our three examples. We rewrite the eigenvalue equation as an ODE in terms of $\psi_1=\psi$ and $\psi_2=\psi_x$ (compare~\cite[(2.32)-(2.34)]{DKN})
\begin{align*}\partial_x\big(\begin{smallmatrix}\psi_1\\\psi_2\end{smallmatrix}\big)&=U\big(\begin{smallmatrix}\psi_1\\\psi_2\end{smallmatrix}\big)&\text{with}&&U&=\big(\begin{smallmatrix}0&1\\u-\lambda&0\end{smallmatrix}\big).
\end{align*}
The time evolution can be lifted to an evolution of the eigenfunction:
\begin{align*}\partial_t\big(\begin{smallmatrix}\psi_1\\\psi_2\end{smallmatrix}\big)&=V\big(\begin{smallmatrix}\psi_1\\\psi_2\end{smallmatrix}\big)&\text{with}&&V&=\big(\begin{smallmatrix}u_x&-4\lambda-2u\\4\lambda^2-2u\lambda-2u^2+u_{xx}&-u_x\end{smallmatrix}\big).
\end{align*}
Then the Lax equation~\eqref{eq:lax} is equivalent to
\begin{align}\label{eq:lax2}
\big[\partial_t-V,\partial_x-U\big]&=0.
\end{align}
Now Floquet theory~\cite[Section~3]{Fr} applies with the monodromy $M$:
\begin{align*}
\partial_xF&=UF,&
F|_{x=0}&=\unity,&M(\lambda)&=F|_{x=\gamma}.
\end{align*}
The Bloch curve contains all pairs $(\lambda,\mu)$ with $\mu$ being an eigevalue of $M(\lambda)$. For $u\in\banach{1}(\mathbb{R}/\gamma\mathbb{Z})$ the Peano-Baker series of $F$ converges to an entire function in $\lambda\in\mathbb{C}$. Since $\det F=1$, the Bloch curve is the subvariety
\begin{align}\label{eq:spectral}
&\{(\lambda,\mu)\in\mathbb{C}^2\mid\mu^2-\Delta(\lambda)\mu+1=0\}&
\text{with}&&\Delta(\lambda)&=\tr(M(\lambda)).
\end{align}
In the finite type case the discriminant $\Delta^2-4$ has only finitely many odd order roots. In this case the Bloch curve can be compactified to an algebraic curve $\Sigma$ with a meromorphic function $\lambda$ with a double pole at $\lambda=\infty$, and a function $\mu$ with essential singularity at $\lambda=\infty$. We encode the function $\mu$ into the abelian differential of the second kind $\omega=d\ln\mu$ with second order pole at $\lambda=\infty$. The hyperelliptic algebraic curve $\Sigma$ together with the meromorphic function $\lambda$ of degree two and the Abelian differential of the second kind $\omega$ uniquely determines the corresponding Bloch curve. The corresponding triples $(\Sigma,\lambda,\omega)$ are characterised by the following properties:
\begin{enumerate}
\item[(A)] $\omega$ is anti-symmetric with respect to the hyperelliptic involution $\sigma$. It has one pole of second order, and $\lambda$ has a second order pole there. $\Sigma^\ast$ denotes the complement of this point.
\item[(B)] $\omega$ is the logarithmic derivative of a holomorphic function $\mu$ on $\Sigma^\ast$, which is transformed by the hyperelliptic involution as $\mu\mapsto\mu^{-1}$.
\item[(C)] In case of real potentials, $\Sigma$ is endowed with an anti-holomorphic involution acting as $(\lambda,\mu)\mapsto(\Bar{\lambda},\Bar{\mu}^{-1})$. The fixed point set of this involution has one more component than the geometric genus of $\Sigma$.
\end{enumerate}
As a reference we recommend~\cite[Chapter~2~\S1]{DKN}. The other integrable systems are similar. The main differences are different $\lambda$-dependent matrices $U$ and $V$ and different porperties (A) and (C). In Section~\ref{sec:spectral} we descibe the triples $(\Sigma,\lambda,\omega)$ of periodic finite type potentials by pairs of polynomials $(a,b)$. 
\subsection{Spectral curves of periodic solutions of the NLS equation}\label{sec:nls}
We replace in the foregoing construction of the spectral curves of periodic solutions of the KdV equation, the $\lambda$ dependent matrices $U$ and $V$ by
\begin{align*}U&=\big(\begin{smallmatrix}\ci\lambda&\ci q\\\pm\ci\Bar{q}&-\ci\lambda\end{smallmatrix}\big),&V&=2\lambda U+\big(\begin{smallmatrix}\mp\ci|q|^2&q_x\\\mp\Bar{q}_x&\pm\ci|q|^2\end{smallmatrix}\big)\end{align*}
(compare~\cite[(8.12),(8.13)]{NMPZ}). Here the zero curvature equation is equivalent to the defocusing and the self focusing NLS equation (compare~\cite{Sch}):
$$q_t=-q_{xx}\mp2|q|^2q.$$
The corresponding triples $(\Sigma,\lambda,\omega)$ are characterised by property (B) and
\begin{enumerate}
\item[(A')] $\omega$ is anti-symmetric with respect to the hyperelliptic involution $\sigma$. It has two poles of second order at both poles of first order of $\lambda$ which are interchanged by $\sigma$. $\Sigma^\ast$ denotes the complement of both points.
\item[(C')] In case of real potentials, $\Sigma$ is endowed with an anti-holomorphic involution acting as $(\lambda,\mu)\mapsto(\Bar{\lambda},\Bar{\mu}^{-1})$. The fixed point set of this involution has either one more component than the geometric genus of $\Sigma$ (defocusing NLS) or is empty (self focusing NLS).
\end{enumerate}
\subsection{Spectral curves of periodic solutions of the sinh-Gordon equation}\label{sec:sinh}
There exist different versions of the sinh-Gordon equation \cite{bab}. The highest derivatives might be either the Laplace operator or the wave operator and the constants differ. We present here the version of the sinh-Gordon equation in our main application~\cite{hks1,hks2}, as derived from the Gau{\ss}-Codazzi equations of constant mean curvature surfaces in three-dimensional space forms \cite{PS,Hi,bob}. Instead of the variable $t$ we use here the variable $y$:
$$
(\partial_x^2+\partial_y^2)u + 2\sinh(2u)= 0.
$$
This equation is equaivalent to~\eqref{eq:lax2} with $t$ replaced by $y$ and with the following $\lambda$-dependent matrices $U$ and $V$:
\begin{align*}
U&=\tfrac{1}{2}\big(\begin{smallmatrix}-\ci u_y &\ci e^{-u}+\ci\lambda^{-1}e^u\\\ci e^{-u}+\ci\lambda e^u&\ci u_y\end{smallmatrix}\big),&V&=\tfrac{1}{2}\big(\begin{smallmatrix}\ci u_x &e^{-u}-\lambda^{-1}e^u\\-e^{-u}+\lambda e^u& -\ci u_x\end{smallmatrix}\big).
%\alpha_{\lambda}=\frac{1}{2}
%\begin{pmatrix}
% u_z dz - u_{\Bar{z}} d\Bar{z} &\ci\lambda^{-1}e^u dz +\ci e^{-u}d\Bar{z}\\
%\ci e^{-u}dz +\ci\lambda e^u d\Bar{z} & -u_z dz + u_{\Bar{z}}d\Bar{z}
%\end{pmatrix}.
\end{align*}
The corresponding triples $(\Sigma,\lambda,\omega)$ are characterised by property (B) and
\begin{enumerate}
\item[(A'')] $\omega$ is anti-symmetric with respect to the hyperelliptic involution $\sigma$. It has two poles of second order at the unique pole and root of second order of $\lambda$. $\Sigma^\ast$ denotes the complement of both points.
\item[(C'')] In case of real potentials, $\Sigma$ is endowed with an anti-holomorphic involution acting as $(\lambda,\mu)\mapsto(\Bar{\lambda},\Bar{\mu})$ without fixed points.
\end{enumerate}
\section{Hyperelliptic spectral curves}\label{sec:spectral}
\subsection{Hyperelliptic spectral curves} We consider $\Sigma$ with a meromorphic function $\lambda$ of degree two and with either one, or two or four marked points. We distinguish between four cases parameterized by $(\aa,\bb)\in\{0,1\}^2$:\vspace{2mm}

$(\aa,\bb)=(0,0)$ corresponds to the KdV equation in Section~\ref{sec:kdv},

$(\aa,\bb)=(1,0)$ corresponds to the NLS equation in Section~\ref{sec:nls},

$(\aa,\bb)=(0,1)$ corresponds to the sinh-Gordon equation in Section~\ref{sec:sinh},

$(\aa,\bb)=(1,1)$ corresponds to non-conformal harmonic maps to $\bbS^3$ \cite{Hi}.\vspace{2mm}

For any subset $S\subset\bbP^1$ let $\HS$ denote the set
\begin{equation}\label{eq:preimage}
\HS=\{p\in\Sigma\mid\lambda(p)\in S\}.
\end{equation}
The marked points are invariant under the hyperelliptic involution $\sigma$ and fit togehter to form a set $\HcS$ of the form~\eqref{eq:preimage}. It is determined by the parameter $\bb$:
\begin{align}\label{eq:S}
\calS&=\begin{cases}\{\infty\}&\mbox{for }\bb=0\\\{0,\infty\}&\mbox{for }\bb=1.\end{cases}
\end{align}
The triples $(\Sigma,\lambda,\omega)$ are described by a pair of polynomials $(a,b)$ of degree $2g+(1-\aa)(1-\bb)$ and $g+\bb+\aa\bb$, respectively, which do not vanish in $\C\cap\calS$. The polynomial $a$ defines the variety $\Sigma^\ast$ with hyperelliptic involution $\sigma$:
\begin{align}\label{eq:spectral curve}
\Sigma^\ast&\!=\!\{(\lambda,\nu)\!\in\!(\bbP^1\!\setminus\!\calS)\times\bbC\mid\nu^2\!=\!\lambda^{(1-\aa)\bb}a(\lambda)\},&\sigma\!:\!(\lambda,\nu)\mapsto\!(\lambda,-\nu).
\end{align}
The spectral curve $\Sigma$ is the unique projective (not necessarily smooth) hyperelliptic curve with finitely many smooth points $\HcS$, such that $\Sigma\setminus\HcS$ is biregular to $\Sigma^\ast$. The preimage of each point of $\calS$ contains $2^\aa$ points, and $\HcS$ contains $2^{\aa+\bb}$ points. The polynomial $b$ defines the meromorphic differential
\begin{align}\label{eq:def b}
\omega&=\frac{b(\lambda)}{\lambda^{\bb+\aa\bb}\nu}d\lambda%&\mbox{with}&&q(\lambda)&=\begin{cases}1&\mbox{for }\calS=\{\infty\}\\\lambda&\mbox{for }\calS=\{0,\infty\}.\end{cases}
\end{align}
without residues at $\HcS$. $\Sigma$ and $\Sigma^\ast$ are endowed with an anti-holomorphic involution, and the polynomials $(a,b)$ satisfy a reality condition, i.e.\ are fixed points of an anti-linear involution of the space of polynomials of degree $2g+(1-\aa)(1-\bb)$ and $g+\bb+\aa\bb$, respectively. We consider three involutions:
\begin{align}\label{eq:involution 1}
(\lambda,\nu,a(\lambda),b(\lambda))&\mapsto(\Bar{\lambda},\Bar{\nu},\overline{a(\Bar{\lambda})},\overline{b(\Bar{\lambda})})\\\label{eq:involution 2}
(\lambda,\nu,a(\lambda),b(\lambda))&\mapsto(\Bar{\lambda}^{-1}\hspace{-1mm},\Bar{\lambda}^{g+\aa}\Bar{\nu},\lambda^{\deg a}\overline{a(\Bar{\lambda}^{-1})},-\lambda^{\deg b}\overline{b(\Bar{\lambda}^{-1})})\\\label{eq:involution 3}
(\lambda,\nu,a(\lambda),b(\lambda))&\mapsto(-\Bar{\lambda}^{-1}\hspace{-1mm},\Bar{\lambda}^{g+\aa}\Bar{\nu},\lambda^{\deg a}\overline{a(-\Bar{\lambda}^{-1})},(-1)^g\lambda^{\deg b}\overline{b(-\Bar{\lambda}^{-1})}).
\end{align}
For pairs $(a,b)$ in the fixed point set of~\eqref{eq:involution 1} or~\eqref{eq:involution 2} or~\eqref{eq:involution 3}, $\Sigma$ is invariant and $\omega$ is mapped to $\Bar{\omega}$. For $\bb=0$ only~\eqref{eq:involution 1} is considered, since the involution should permute the marked points. For $\bb=1$ all involutions~\eqref{eq:involution 1}-\eqref{eq:involution 3} are compatible. However, the involution~\eqref{eq:involution 3} has non-trivial fixed points $(a,b)$ only if both degrees $2g$ and $g+1+\aa$ are even, i.e.\ $g$ is odd for $\aa=0$ and $g$ is even for $\aa=1$. Furthermore, the roots of all fixed points $(a,b)$ of~\eqref{eq:involution 2} or \eqref{eq:involution 3} are contained in $\C\setminus\calS$. For $(\aa,\bb)=(0,1)$ the corresponding equations for all three involutions are described in~\cite{bab}. For $(\aa,\bb)=(1,1)$ to the authors' knowledge only the involution~\eqref{eq:involution 2} has been considered so far in~\cite{Hi}.

We normalise the polynomials $a$ in such a way, that they are essentially determined by their roots. In case of the involution~\eqref{eq:involution 1} the normalised poynomials $a$ have highest coefficient $1$ and $a$ is uniquely determined by the roots. In case of the two other involutions~\eqref{eq:involution 2} or~\eqref{eq:involution 3} the highest and lowest coefficients have absolute value $1$. In these cases the polynomials $a$ are determined by the roots only up to multiplication with $\pm 1$. In particular, a continous deformation of a given normalised polynomial $a$ is uniquely determined by the corresponding deformations of the roots of $a$. In some cases there is a unique way to determine $a$ in terms of its roots. For example, in case of our main application~\cite{hks1,hks2} with $(\aa,\bb)=(0,1)$ and $\calS=\{0,\infty\}$, in addition the involution~\eqref{eq:involution 2} is assumed to have no fixed points on $\Sigma$. Then $a$ is uniquely determined by its roots and the condition
\begin{align*}%\label{eq:hermit}
\lambda^{-g-1}\,a(\lambda)&\leq 0&\mbox{for }\lambda\in\bbS^1.
\end{align*}
\begin{definition}
For $(\aa,\bb)\in\{0,1\}^2$ let $\pol^g$ be the space of all pairs $(a,b)$ with a normalised polynomial $a$ of degree $2g+(1-\aa)(1-\bb)$ and a polynomial $b$ of degree $g+\bb+\aa\bb$ with the following three properties: Both $a$ and $b$ have no roots in $\C\cap\HcS$, they are fixed points of the involution~\eqref{eq:involution 1} or~\eqref{eq:involution 2} or~\eqref{eq:involution 3}, and finally~\eqref{eq:def b} has no residues at the elements of $\HcS$.

The subset of $(a,b)\in\pol^g$ with $b/a$ having at most first order poles on $\C\setminus\calS$ is denoted by $\reg^g$.

The subset of $\{(a,b)\in\reg^g\mid\resultant(a,b)\ne 0\}$ is denoted by $\ct^g$.
\end{definition}
Since $\omega$ is antisymmetric with respect to $\sigma$, for $\aa=0$, $\omega$ has no residues at the fixed points $\HcS$ of $\sigma$ for all $a$ and $b$. For $\aa=1$ this condition reduces the real dimension of the affine space of pairs $(a,b)$ by $1+\bb$. Hence $\pol^g$ is an open subset of an affine real space of dimension
\begin{equation}\label{eq:dim}
2g+(1-\aa)(1-\bb)+g+\bb+\aa\bb+1-\aa(1+\bb)=3g+2-2\aa+\aa\bb.
\end{equation}
The subset $\reg^g\subset\pol^g$ is neither open nor closed. The condition that $b/a$ has at most first order poles on $\C\setminus\calS$ is equaivalent to $\omega$~\eqref{eq:def b} beeing the derivative of a function which is regular on $\Sigma^\ast$. For $(a,b)\in\reg^g$ higher order roots of $a$ are common roots of $a$ and $b$. Hence $\ct^g$ is the open subset of $(a,b)\in\pol^g$ with non-vanishing discriminant $\disc(a)$ and non-vanishing $\resultant(a,b)$. Hence $\ct^g$ is a submanifold of the real affine space $\pol^g$.
\subsection{M\"obius transformations}\label{sec:moebius} All M\"obius transformations of $\lambda$ which preserve $\calS$ and the corresponding involution~\eqref{eq:involution 1}-\eqref{eq:involution 3} change the polynomials $(a,b)$, but the corresponding spectral curves $\Sigma$ are biholomorphic and the forms $\omega$ are the pullbacks of each other. For $\bb=0$ and the involution~\eqref{eq:involution 1} they take the form $\lambda\mapsto\alpha\lambda+\beta$ with $0\ne\alpha,\beta\in\R$. For $\bb=1$ they take the form $\lambda\mapsto\alpha\lambda$ with $\alpha\in\R\setminus\{0\}$ in case of the involution~\eqref{eq:involution 1} and $|\alpha|=1$ in case of the involution~\eqref{eq:involution 2} or~\eqref{eq:involution 3}. Let us now introduce real parameters, whose values parameterize these M\"obius transformations. For this purpose we define $\gamma\in\C$ such that $\omega-\gamma d(\nu\lambda^{\aa-g})$ is holomorphic at $\lambda=\infty$. Since $\omega$ and $d(\nu\lambda^{\aa-g})$ both have a second order pole without residue at the points in $\HcS$ over $\lambda=\infty$, $\gamma$ exists and is unique. For $\bb=0$ we only consider the involution~\eqref{eq:involution 1} and $\gamma$ is real. In this case the two-dimensional M\"obius transformations are parameterized by $\gamma$ and the second highest coefficient of $a$. For $\bb=1$ and the involution~\eqref{eq:involution 1} the one-dimensional M\"obius transformations are parameterized by $\gamma$. Finally for $\bb=1$ and the involutions~\eqref{eq:involution 2} or \eqref{eq:involution 3} $\gamma/|\gamma|\in \bbS^1$ parameterizes the one-dimensional M\"obius transformations.
\subsection{The isoperiodic set}\label{sec:isoperiodic} Let us now introduce our main object, namely the isoperiodic set. For $(a,b)\in\reg^g$ the integrals of $\omega$ along cyclels in $H_1(\Sigma,\bbZ)$ are called periods. The isoperidoic set is the subset of $\reg^g$ on which all these periods are locally constant. To make this more precise we calculate the periods along special representatives of the cycles. For $\aa=1$ we call the elements of $\{\lambda\in\C\setminus\calS\mid\lambda^{(1-\aa)\bb}a(\lambda)=0\}$ branch points of $a$ and for $\aa=0$ the elements of $\{\lambda\in\C\mid a(\lambda)=0\}\cup\{\infty\}$. For a smooth path in $\bbP^1$ which connects two branch points and does not pass through any other branch point, the difference of the two lifts to $\Sigma$ defines a cycle in $H_1(\Sigma,\bbZ)$. Such cycles generate $H_1(\Sigma,\bbZ)$. Because $\omega$ is anti-symmetric with respect to the hyperelliptic involution $\sigma$, the integrals of $\omega$ along such a cycle is twice the integral of $\omega$ along one of the two lifts to $\Sigma$ of the path in $\bbP^1$. For $\aa=0$ the poles of $\omega$ are branch points and the integral of $\omega$ along a path starting or ending at a pole of $\omega$ does not exist. In this case we choose a meromorphic function $g$ on $\bbP^1$ without poles at the branch points in $\C\setminus\calS$, such that $\omega-d(g\nu)$ has no poles at $\calS$. Since all periods of $d(g\nu)$ vanish the integral of $\omega-d(g\nu)$ along a path from a branch point in $\calS$ to another branch point is equal to half the period of $\omega$ along the corresponding cycle. Here the path should not pass through other branch points or other poles of $g$. In the following definition the integral of $\omega$ along such paths ending at poles of $\omega$ should be replaced by the correponding integral of $\omega-d(g\nu)$ with a meromophic function $g$ as decribed above. In order to define the isoperiodic set in the neighbourhood of a given $(a,b)\in\reg^g$ we choose a covering $\calO$ of $\bbP^1$ by simply connected open subsets $O\in\calO$ with the following properties:
\begin{enumerate}
\item[(i)] $\Bar{O}$ contains for each $O\in\calO$ exactly one root of $a$.

\item[(ii)] $O\cap U$ is either empty or connected for any two $O,U\in\calO$.
\end{enumerate}
Since $\calO$ is a covering the branch point in (i) belongs to $O$. Such a covering defines an open neighbourhood $\calU$ of $(a,b)$ in $\reg^g$: The set of all $(\Tilde{a},\Tilde{b})\in\reg^g$ such that $O$ and $\bar{O}$ contain for all $O\in\calO$ the same number of roots of $\tilde{a}$ as roots of $a$ (counted with multiplicity). We decorate the functions $\lambda$, $\nu$, the form $\omega$ and the spectral curve $\Sigma$ corresponding to $(\Tilde{a},\Tilde{b})$ with a tilde.
\begin{definition}\label{isoperiodic}
Let $(a,b)\in\reg^g$ and $\calO$ be a covering of $\bbP^1$ by simply connected open subsets of $\bbP^1$ obeying~(i)-(ii). The isoperiodic set $\calI(a,b)$ of $(a,b)$ is the set of all $(\Tilde{a},\Tilde{b})\in\calU$ with two properties:
\begin{enumerate}
\item[(i)] For all $O\in\calO$ the integrals of $\Tilde{\omega}$ along a path in $O$ connecting two branch points of $\Tilde{a}$ in $O$ vanish.
\item[(ii)] For all pairs of non-disjoint $O\ne U\in\calO$ the integrals of $\Tilde{\omega}$ along a path in $O\cup U$ from a branch point of $\Tilde{a}$ in $O$ to a branch point of $\Tilde{a}$ in $U$ is equal to the integral of $\omega$ along a path in $O\cup U$ from the unique branch point of $a$ in $O$ to the unique branch point of $a$ in $U$.
\end{enumerate}
\end{definition}
If $\calO$ has for given $(a,b)\in\reg^g$ the properties~(i)-(ii), then $\HO$ is for all $O\in\calO$ simply connected and there exist unique meromorphic functions
\begin{align}\label{eq:local}
f_O\!:\!O&\!\to\!\bbP^1\!,&\!\xi_O&\!=\!f_O\nu&\!\mbox{with}\!&&d\xi_O&\!=\!\frac{2f'_O(\lambda)a(\lambda)\!-\!f_O(\lambda)a'(\lambda)}{2\nu}d\lambda\!=\!\omega|_{\HO}.
\end{align}
Furthrmore, if $(\Tilde{a},\Tilde{b})\in\calU$ has property~(i) in Definition~\ref{isoperiodic}, then there exist unique meromorphic functions
\begin{align}\label{eq:local 2}
\Tilde{f}_O\!:\!O&\!\to\!\bbP^1\!,&\!\Tilde{\xi}_O&\!=\!\Tilde{f}_O\Tilde{\nu}&\!\mbox{with}\!&&d\Tilde{\xi}_O&\!=\!\frac{2\Tilde{f}'_O(\lambda)\Tilde{a}(\lambda)\!-\!\Tilde{f}_O(\lambda)\Tilde{a}'(\lambda)}{2\Tilde{\nu}}d\lambda\!=\!\Tilde{\omega}|_{\Tilde{O}}.
\end{align}
Here $\Tilde{O}$ denotes the preimage of $O$ with respect to the map $\Tilde{\lambda}:\Tilde{\Sigma}\to\bbP^1$ of the spectral curve of $\Tilde{a}$ analogous to~\eqref{eq:preimage}. Then condition~(ii) in Definition~\ref{isoperiodic} is equivalent to the following equations for all non-disjoint $O\ne U\in\calO$
\begin{equation}\label{eq:const}
\xi_U-\xi_O=\Tilde{\xi}_U-\Tilde{\xi}_O.
\end{equation}
Note that both sides are by definition of $\xi_O$, $\xi_U$, $\Tilde{\xi}_O$ and $\Tilde{\xi}_U$ constant on $\HO\cap\HU$ and $\Tilde{O}\cap\Tilde{U}$, repsectively. Moreover, the isoperiodic sets defined in terms of two coverings with the proerties~(i)-(ii) conincide on the intersection of the correpsonding open neighbourhoods of $(a,b)$ in $\reg^g$.
\subsection{Adding and removing double points}\label{sec:double points}
We can add at any $\lambda_0$ to $a$ a double root and to $b$ a simple root without changing $\omega$. Since $a$ and $b$ are fixed under the involution~\eqref{eq:involution 1} or~\eqref{eq:involution 2} or~\eqref{eq:involution 3}, we should add either two roots interchanged by this involution or a root in the fixed point set. However, $b/a$ can have at most first order poles on $\bbP^1\setminus\HcS$ and the local functions function $f_O$~\eqref{eq:local} are regular on $\Sigma^\ast$. Hence we can add only at those $\lambda_0\in\bbC\setminus\calS$ double roots of $a$ and simple roots of $b$, where the corresponding $f_O$ vanishes. More generally, if for a pair $(a,b)\in\reg^g$ an appropriate polynomial $p$ divides the functions $f_O$ (i.e.\ $f_O/p$ are holomorphic on $O\setminus\calS$), then the pair $(\Hat{a},\Hat{b})=(ap^2,bp)$ belongs to $\reg^{g+\deg p}$. Here the polynomial $p$ should be normalised and fixed under the involution~\eqref{eq:involution 1} or~\eqref{eq:involution 2} or~\eqref{eq:involution 3}.

Conversely, if the first polynomial of $(\Hat{a},\Hat{b})\in\reg^{\Hat{g}}$ has higher order roots in $\bbP^1\setminus\calS$, then also $\Hat{b}$ has a root there, such that the local functions $f_O\nu$ are holomorphic there. Hence for an appropriate polynomial $p$ whose square divides $\Hat{a}$ the polynomial $p$ divides $\Hat{b}$ and the pair $(a,b)=(\Hat{a}/p^2,\Hat{b}/p)$ belongs to $\reg^{\Hat{g}-\deg p}$. Again the polynomial $p$ should be normalised and fixed under the involution~\eqref{eq:involution 1} or~\eqref{eq:involution 2} or~\eqref{eq:involution 3}. If $p$ collects all higher order roots of $\Hat{a}$, then the polynomial $a$ of the transformed pair $(a,b)$ has only simple roots. We denote these transformations by
\begin{align}\label{eq:double roots}
(a,b)&\mapsto(\Hat{a},\Hat{b})=(ap^2,bp)&\text{and}&&
(\Hat{a},\Hat{b})&\mapsto(a,b)=(a/p^2,b/p).
\end{align}
The isoperiodic set of such pairs $(\Hat{a},\Hat{b})$ with higher order roots of $a$ contains generically pairs whose first polynomial has only simple roots and the geometric genus is increased. In case of an involution~\eqref{eq:involution 1} or~\eqref{eq:involution 2} without fixed points and a double point interchanged by the hyperelliptic involution (i.e.\ a fixed point of the composition of the anti-holomorphic involution with the hyperlliptic involution) only on a subsest of the level set the condition is preserved that the anti-holomorphic involution has no fixed points. Such data $(a,b)$ are boundary points of the subsest of level set in $\reg^g$ whose elements correspond to $\Sigma$ without fixed points of the involution~\eqref{eq:involution 1} or~\eqref{eq:involution 2}.
%
%%%%%%%%%%%%%%%%%%%%%%%%%%%%
%%%%%%%%%%%%%%%%%%%%%%%%%%%%%
%%%%%%%%%%%%%%%%%%%%%%%%%%%%%%%
%
%
%
\section{Deformations of hyperelliptic spectral curves}\label{abdeformation}
In this section we describe one-dimensional families $t\mapsto(a,b)$ parameterized by a real variable  $t\in(-\epsilon,\epsilon)$ in $\calI((a,b)|_{t=0})$, they are integral curves of vector fields on $\ct^g$.% We derive vector fields on $\ct^g$, whose integrals curves are such families.
\subsection{The Whitham flow}\label{sec:whitham}
We consider such a smooth family $t\mapsto(a,b)$ parameterized by $t\in(\-\epsilon,\epsilon)$. The functions on the corresponding spectral curves depend on $\lambda$ and $t$. The derivative with respect to $t$ is denoted by a dot. On the open set $O\in\calO$ of a covering as in Section~\ref{sec:isoperiodic} we have
\begin{align}\label{eq:dot}
\frac{\partial}{\partial t}\xi_O&=\dot{f}_O(\lambda)\nu+f_O(\lambda)\dot{\nu}=
\frac{2\dot{f}_O(\lambda)a(\lambda)+f_O(\lambda)\dot{a}(\lambda)}{2\nu}.
\end{align}
Since the left-hand side of~\eqref{eq:const} does not depend on $t$, the left-hand sides of~\eqref{eq:dot} fit together to form a global meromorphic function on $\Sigma^\ast$ with poles of at most first order at the points in $\HcS$. This function is of the form
\begin{equation}\label{eq:def c}
\frac{c(\lambda)}{\lambda^{\aa\bb}\nu}
\end{equation}
with a polynomial $c$ of degree at most $g+1+\aa\bb$ in the fixed point set of an involution selected out of the following list accordingly to~\eqref{eq:involution 1} or~\eqref{eq:involution 2} or~\eqref{eq:involution 3}:
\begin{align}\label{eq:involutions c}
c(\lambda)&\!\mapsto\!\overline{c(\Bar{\lambda})}\mbox{ or}&\hspace{-2mm}c(\lambda)&\!\mapsto\!\lambda^{g+1+\aa\bb}\overline{c(\Bar{\lambda}^{-1})}\mbox{ or}&\hspace{-2mm}c(\lambda)&\!\mapsto\!(\!-1)^{\aa}\lambda^{g+1+\aa\bb}\overline{c(\!-\Bar{\lambda}^{-1})}.
\end{align}
Note that the third involution again has only non-trivial fixed points for $g+1+\aa$ even with $(-1)^{\aa}=-(-1)^g$. For $\bb=0$ the choices $c(\lambda)=b(\lambda)$ and $c(\lambda)=\lambda b(\lambda)$ correspond to the infinitesimal M\"obius transformations $\dot{\lambda}=1$ and $\dot{\lambda}=\lambda$. For $\bb=1$ there is only one such trivial choice $c(\lambda)=\sqrt{\pm 1}b(\lambda)$ with $+$ for~\eqref{eq:involution 1} and $-$ for~\eqref{eq:involution 2} or~\eqref{eq:involution 3}. It corresponds to the infinitesimal M\"obius transformation $\dot{\lambda}=\sqrt{\pm 1}\lambda$. We differentiate \eqref{eq:def b} with respect to $t$, and~\eqref{eq:def c} with respect to $\lambda$ and derive
\begin{align*}
\frac{\partial}{\partial\lambda}\frac{c(\lambda)}{\lambda^{\aa\bb}\nu}&=%\frac{c'(\lambda)}{\lambda^{\bb(1-\aa)}\nu}-\frac{\bb(1-\aa)c(\lambda)}{\lambda^{\bb(1-\aa)+1}\nu}-\frac{c(\lambda)\nu'}{\lambda^{\bb(1-\aa)}\nu^2}=
\frac{2a(\lambda)c'(\lambda)-a'(\lambda)c(\lambda)}{2\lambda^{\aa\bb}a(\lambda)\nu}-\frac{\aa\bb c(\lambda)}{\lambda^{1+\aa\bb}\nu},\\
\frac{\partial}{\partial t}\frac{b(\lambda)}{\lambda^{\bb+\aa\bb}\nu}&=\frac{\dot{b}(\lambda)}{\lambda^{\bb+\aa\bb}\nu}-\frac{b(\lambda)\dot\nu}{\nu^2\lambda^{\bb+\aa\bb}}=\frac{2a(\lambda)\dot{b}(\lambda)-b(\lambda)\dot{a}(\lambda)}{2\lambda^{\bb+\aa\bb}a(\lambda)\nu}.
\end{align*}
Hence second partial derivatives of $\xi_O$~\eqref{eq:local} commute if and only if
\begin{align}\label{eq:integrability 1}
\lambda^{1-\bb}\big(2a(\lambda)\dot{b}(\lambda)-\dot{a}(\lambda)b(\lambda)\big)&=\lambda\big(2a(\lambda)c'(\lambda)-a'(\lambda)c(\lambda)\big)-2\aa\bb a(\lambda)c(\lambda).
\end{align}
For $\bb=0$ both sides vanish at $\lambda=0$. Hence~\eqref{eq:integrability 1} is an equation for the polynomial $2a\dot{b}-\dot{a}b$ in the fixed point set of the action of the involution~\eqref{eq:involution 1} or~\eqref{eq:involution 2} or~\eqref{eq:involution 3} on this polynomial. Hence the number of independent real equations in~\eqref{eq:integrability 1} is at most $1+\deg a+\deg b$, which is the real dimension of $\pol^g$, since $a$ is normalised. If $a$ and $b$ have no common root, then~\eqref{eq:integrability 1} uniquely determines the Taylor coefficients of $\dot{a}$ and $\dot{b}$ at the roots of $a$ and $b$ up to the orders of the roots minus one, respectively, and the highest coefficient of $\dot{b}$. Hence in this case $c$ uniquely determines the derivatives $(\dot{a},\dot{b})$ of the pair $(a,b)$ with normalised polynomial $a$. Moreover, smooth $c$ define unique $(\dot{a},\dot{b})$ depending smoothly on $(a,b)$. This proves the first statement of the following theorem:
\begin{theorem}\label{thm:deformation}
For $(a,b)\in\ct^g$ and a polyonmial $c$ of degree $g+1+\aa\bb$ in the fixed point set of the involution selected out of the list~\eqref{eq:involutions c} accordingly to the involution~\eqref{eq:involution 1} or~\eqref{eq:involution 2} or~\eqref{eq:involution 3} the equation~\eqref{eq:integrability 1} determines a unique tangent vector $(\dot{a},\dot{b})\in T_{(a,b)}\ct^g$. All elements in the intersection of the kernels of the derivatives of all periods are of this form, and $\calI(a,b)\cap\ct^g$ is a $(g+2+\aa\bb)$-dimensional submanifold of $\ct^g$.
\end{theorem}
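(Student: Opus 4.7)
The first assertion of the theorem was established in the paragraph immediately preceding the statement. What remains is to show that (a) every $(\dot a,\dot b)\in T_{(a,b)}\ct^g$ in the common kernel of the period differentials arises from some admissible $c$ via~\eqref{eq:integrability 1}, (b) conversely each such $(\dot a,\dot b)$ actually preserves all periods, and (c) $\calI(a,b)\cap\ct^g$ is a $(g+2+\aa\bb)$-dimensional submanifold of $\ct^g$. The plan is to produce $c$ from a period-preserving tangent by patching the local expressions~\eqref{eq:dot} into a global meromorphic function on $\Sigma^*$; to obtain~(b) as a by-product of the same patching; to verify that $c\mapsto(\dot a,\dot b)$ is injective on $\ct^g$ so the kernel has constant real dimension $g+2+\aa\bb$; and to deduce~(c) from the constant-rank theorem.

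For~(a), let $t\mapsto(a(t),b(t))$ be a smooth curve in $\reg^g$ through $(a,b)$ with tangent in the kernel of the period differentials. Differentiating~\eqref{eq:const} in $t$ at $t=0$ shows that the quantities $\partial_t\xi_O$ of~\eqref{eq:dot} agree on every non-empty overlap $\HO\cap\HU$, so they patch to a single meromorphic function $\varphi$ on $\Sigma^*$, antisymmetric under $\sigma$. At every branch point of $a$ in $\bbC\setminus\calS$, $\varphi$ has at most a simple pole in the local uniformiser, since the numerator in the global form of~\eqref{eq:dot} is polynomial in $\lambda$ while $1/\nu$ has a simple pole there. At each point of $\HcS$, $\varphi$ inherits at most a simple pole from $\xi_O$ (the primitive of a second-order pole of $\omega$ with vanishing residue integrates to a simple pole). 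Every antisymmetric meromorphic function on $\Sigma^*$ with this pole behaviour is of the form $\varphi=c(\lambda)/(\lambda^{\aa\bb}\nu)$ for a polynomial $c$ of degree at most $g+1+\aa\bb$, and the reality of $(a(t),b(t))$ places $c$ in the fixed-point set of the companion involution from~\eqref{eq:involutions c}. Equality of mixed partial derivatives $\partial_\lambda\partial_t\xi_O=\partial_t\partial_\lambda\xi_O$, exactly as in the calculation preceding the theorem, then yields~\eqref{eq:integrability 1} for this $c$. For~(b), the same identity~\eqref{eq:integrability 1} ensures conversely that once $(\dot a,\dot b)$ is produced from an admissible $c$, the function $\partial_t\xi_O=c/(\lambda^{\aa\bb}\nu)$ is global on $\Sigma^*$, so that $\partial_t(\xi_U-\xi_O)\equiv 0$ on every overlap and the periods are infinitesimally preserved.

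To see that $c\mapsto(\dot a,\dot b)$ is injective on $\ct^g$, suppose $(\dot a,\dot b)=0$. Then~\eqref{eq:integrability 1} collapses to $\lambda(2ac'-a'c)=2\aa\bb ac$, which forces $c(\lambda_0)=0$ at every simple root $\lambda_0\in\bbC\setminus\{0\}$ of $a$, and additionally $c(0)=0$ when $\aa\bb=1$. Writing $c=\lambda^{\aa\bb}\tilde c$ reduces the equation to $2a\tilde c'=a'\tilde c$, i.e.\ $(\tilde c^2/a)'\equiv 0$ and hence $\tilde c^2=K\,a$ for a constant $K$; the simplicity of the roots of $a$ on $\ct^g$ (together with a parity check on $\deg a$ in the case $(\aa,\bb)=(0,0)$) rules out $K\neq 0$, forcing $\tilde c=c=0$. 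The admissible polynomials $c$ form a real vector space of constant dimension $g+2+\aa\bb$ throughout $\ct^g$, which is therefore also the constant dimension of the kernel of the period differentials. Consequently the period map has constant rank $(3g+2-2\aa+\aa\bb)-(g+2+\aa\bb)=2g-2\aa$ in a neighbourhood of $(a,b)$, and the constant-rank theorem endows $\calI(a,b)\cap\ct^g$ with the structure of a submanifold of $\ct^g$ of dimension $g+2+\aa\bb$.

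The principal obstacle is the pole analysis in step~(a): tracking the precise pole orders of $\varphi$ at both the branch points and the marked points $\HcS$ uniformly across the four cases $(\aa,\bb)\in\{0,1\}^2$, and matching the correct reality involution from~\eqref{eq:involutions c} with each of~\eqref{eq:involution 1}--\eqref{eq:involution 3}. Once $a$ has only simple roots in $\ct^g$, the injectivity argument that yields the dimension equality is comparatively routine, although it too must be verified case-by-case to cover the four parameter combinations.
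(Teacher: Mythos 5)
Your proof is correct and takes essentially the same approach as the paper: patch the quantities $\partial_t\xi_O$ into a global meromorphic function $c/(\lambda^{\aa\bb}\nu)$ to identify the kernel of the period differentials with the space of admissible polynomials $c$, then compare dimensions with the rank $2g-2\aa$ of $H_1(\Sigma,\Z)$ and conclude via the constant-rank/implicit-function theorem. The one genuine variation is your direct algebraic verification that $c\mapsto(\dot a,\dot b)$ is injective (collapsing~\eqref{eq:integrability 1} to $2a\tilde c'=a'\tilde c$, hence $\tilde c^2=Ka$, then ruling out $K\ne0$ using $\disc(a)\ne0$ and, for $(\aa,\bb)=(0,0)$, the odd degree of $a$); the paper instead gets injectivity for free from the fact that $(\dot a,\dot b)\mapsto c$ (via $\partial_t\xi_O$) is a two-sided inverse, which is already implicit in your steps (a) and (b), so your explicit computation is a valid but redundant double-check.
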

\begin{proof}
A tangent vector $(\dot{a},\dot{b})$ belongs to the intersection of the kernels of the derivatives of all periods, if and only if the infinitesimal version of the conditions~(i) and (ii) in Definition~\ref{isoperiodic} are satisfied. In particular, they are the tangent vectors determined by the polynomials $c$. This shows that the kernel is $g+2+\aa\bb$-dimemsional. The rank of $H_1(\Sigma,\bbZ)$ is equal to the number of branch points minus $2$, which is $2g-2\aa$. Hence the dimension~\eqref{eq:dim} of $\ct^g$ minus the number of independent periods is equal to the dimension $g+2+\aa\bb$ of the former kernel.  Hence the derivatives of the periods are linear independent, and $\calI(a,b)\cap\ct^g$ is due to the implicit function theorem a submanifold of $\ct^g$ of dimension $g+2+\aa\bb$. The former tangent vectors determined by the polynomials $c$ are the elements of the tangent space of this submanifold.
\end{proof}
In the case, where $a$ and $b$ have only simple pairwise different roots $\alpha_i$ and $\beta_j$, respectively, we get the equations
\begin{align}\label{eq:integrability 2}
\dot{\alpha}_i&=-\frac{\dot{a}(\alpha_i)}{a'(\alpha_i)}=-\frac{\alpha_i^{\bb}c(\alpha_i)}{b(\alpha_i)},&\dot{\beta_j}&=-\frac{\dot{b}(\beta_j)}{b'(\beta_j)}=\frac{\beta_j^{\bb}\big(a'(\beta_j)c(\beta_j)\!-\!2a(\beta_j)c'(\beta_j)\big)}{2a(\beta_j)b'(\beta_j)}\!+\!\frac{\aa\bb c(\beta_j)}{\beta_j^{1-\bb}b'(\beta_j)}.
\end{align}
\subsection{The singularities of the Whitham flow}
For given $(a,b)\in\pol^g$ the equations~\eqref{eq:integrability 1} are linear equations for the coefficients of $(\dot{a},\dot{b})$ depending linearly on the coefficients of $c$. Such equations have a general solution, if the determinant of the corresponding matrix of coefficients does not vanish. This determinant is a polynomial with respect to the coefficients of $a$ and $b$. Due to Theorem~\ref{thm:deformation}, this determinant does not vanish for $\disc(a)\ne0\ne\resultant(a,b)$. Hence~\eqref{eq:integrability 1} defines a vector field on $\pol^g$, which depends rationally on the coefficients of $a$ and $b$, and linearly on the coefficients of $c$.

We may multiply these meromorphic vector fields with the least common multiple of the denominators of all entries of the rational vector field and obtain a non-trivial holomorphic vector field. The integral curves of the latter holomorphic vector field are reparameterized integral curves of the former meromorphic vector fields. In particular, the integral curves of both vector fields have the same image sets in $\pol^g$, in which we are mainly interested. Hence we may consider the holomorphic vector fields instead of the meromorphic vector fields. Instead of the zeroes of the denominators of meormorphic vector fields defined by~\eqref{eq:integrability 1} the holomorphic vector fields have zeroes. They are called singularities of the holomorphic vector fields, since in finite time the trajectories can neither pass in or out from the roots of a holomorhic vector field. However, the linearisation of the vector fields at the roots give some information on the stable and unstable manifolds, i.e. the trajectories passing in and out in infinite time. Moreover, even if the linearisation of the vector field vanishes, then the linearisation of a blow up may be non-trivial. Along these lines we investigate the holomorphic vector fields defined by~\eqref{eq:integrability 1} in Section~\ref{sec:common root}.

In the next section we deform the local functions $\xi_O$ instead of $(a,b)$ and the derivatives $\omega$ of the $\xi_O$. The Whitham flow turns out to be non-singular at $(a,b)\in\reg^g$, if $a$ and the $f_O$'s have no common roots.% This is a particular case of the construction of universal deformations in~\cite{CS}.
\section{A smooth parametrisation of the isoperiodic set}\label{sec:param}
We consider data $(a,b)\in\reg^g$ with a covering $\calO$ as in Section~\ref{sec:isoperiodic} and the corresponding functions $f_O$ and $\xi_O$~\eqref{eq:local}. In this section we show that $\calI(a,b)$ is a submanifold of $\pol^g$ at $(a,b)\in\reg^g$, if there are no common roots of the $f_O$'s and $a$. Due to Section~\ref{sec:double points} this condition is equivalent to $b/a$ having simple poles at common roots of $a$ and $b$. This construction allows us to increase the genus of a spectral curve $\Sigma$ by opening a double point. If there are common roots of $a$ and one of the $f_O$'s, then we consider spectral data $(\Hat{a},\Hat{b})=(ap^2,bp)$, such that the corresponding functions $\Hat{f}_O=f_O/p$~\eqref{eq:local} do not have common roots with $\Hat{a}$. This means that we add to $a$ as many double roots as the order of the corresponding root of $f_O$. This procedure reduces the order of the root of $f_O$. Therefore the number $n$ of double points we can add to any root of $f_O$ for given $(a,b)\in\reg^g$ is equal to the order of the root of $f_O$ and the resulting $(\Hat{a},\Hat{b})\in\reg^{g+n}$ will not have a root of $\Hat{f}_O=f_O/p$ at the root of $f_O$. Moreover we have the choice to do the same simultaneously at finitely many roots of $f_O$. The number of roots of $f_O$ is locally constant. In particular, in $\calI(a,b)$ a neighbourhood of any $(a,b)\in\reg^g$ is embedded into the manifold $\calI(\Hat{a},\Hat{b})$. In the next section we will see that we can pass in $\calI(\Hat{a},\Hat{b})$ through a singularity of the vector field described in Theorem \ref{thm:deformation}, when $a$ and $b$ have common roots without increasing the geometric genus.

Now we construct the smooth parametrisation of $\calI(a,b)$ if the $f_O$'s and $a$ have no common roots. We choose simply connected disjoint neighbourhoods $V_1,\ldots,V_M$ in $\C\setminus\calS$ of all roots of $b$ including the common roots with $a$. Let $\HV_1,\ldots,\HV_M$ denote the preimages~\eqref{eq:preimage} in $\Sigma^\ast$. We assume that each $\HV_m$ is contained in the domain $\HO$ of one of the functions $\xi_O$ and set $\xi_m=\xi_O|_{\HV_m}$. Since $\sigma^*\xi_m=-\xi_m$, the square depends only on $\lambda$ (\cite[Theorem~8.2]{Fo})
\begin{equation}\label{eq:local description}
\xi_m^2=A_m
\end{equation}
with holomorphic functions $A_m:V_m\to\C$ which vanish at the roots of $a$. We choose $V_m$ sufficiently small, such that the derivative of $A_m$ has no roots besides the corresponding root of $b$, which is also a root of $d\xi_m=\omega|_{\HV_m}$. For small enough $V_m$ there exists a biholomorphic map $\lambda\mapsto z_m(\lambda)$ from $V_m$ to a simply connected open neighbourhood $W_m$ of $0\in\mathbb{C}$, such that
\begin{equation}\label{eq:pol A}
A_m(\lambda)=z_m^{d_m}(\lambda)+ a_m.
\end{equation}
At a root of $b$ which is not a root of $a$ (i.e.\ a root of $\omega$), the constant is $a_m\in\mathbb{C}$, and $d_m -1$ is the order of the root of $b$. At a common root of $a$ and $b$, the constant is $a_m =0$ (this includes the case of double points).

We  describe  spectral curves in a neighbourhood of the given
spectral curve by small perturbations $\Tilde{A}_1,\ldots,\Tilde{A}_M$
of the polynomials $A_1,\ldots,A_M$. More precisely, we consider
polynomials $\Tilde{A}_1,\ldots,\Tilde{A}_M$ of the form
\begin{equation}\label{eq:deformed pol A}
\Tilde{A}_m(z_m)=z_m^{d_m}+\Tilde{a}_{m,1}z_m^{d_m-1}+\Tilde{a}_{m,2}z_m^{d_m-2}+\ldots+\Tilde{a}_{m,m}
\end{equation}
with coefficients $\Tilde{a}_{m,2},\ldots,(\Tilde{a}_{m,m}-a_m)$ near zero. By a shift $z\mapsto z+z_0$, we can always assume that the sum of the roots is zero and then $\Tilde{a}_{m,1}=0$. We glue each $W_m$ of the sets $W_1,\ldots,W_M$ to $\mathbb{P}^1\setminus(V_1\cup\ldots\cup V_M)$ in such a way along the boundary of $V_m$ that for all $m=1,\ldots,M$  the polynomial $\Tilde{A}_m $ coincides with the unperturbed function $A_m $ in a tubular neighborhood of the boundary $\partial W_m$. We obtain a new copy of $\bbP^1$. By uniformisation, there exists a new global parameter $\Tilde{\lambda}$, which takes at the points of $\calS$ the same values as $\lambda$. This new parameter is unique up to a M\"obius transformation decsribed in Section~\ref{sec:moebius}. In particular there exist biholomorphic maps $\lambda\mapsto\Tilde{\lambda}=\phi(\lambda)$ from $\lambda\in\bbP^1\setminus(V_1\cup\ldots\cup V_M)$ and $z_m\mapsto\Tilde{\lambda}=\phi_m(z_m)$ from $z_m\in W_m$ to the corresponding domains of $\Tilde{\lambda}\in\bbP^1$. Let $\Tilde{\lambda}\mapsto\Tilde{a}(\Tilde{\lambda})$ be the normalised polynomial whose roots (counted with multiplicities) coincide with the zero set of $\Tilde{A}_1(\Tilde{\lambda}),\ldots,\Tilde{A} _M(\Tilde{\lambda})$ and the roots of $\Tilde{\lambda}\mapsto a\circ\phi^{-1}(\Tilde{\lambda})$ on $\bbP^1\setminus(V_1\cup\ldots\cup V_M)$. Now $\Tilde{\Sigma}=\{(\Tilde{\nu},\Tilde{\lambda})\in\C^2\mid\Tilde{\nu}^2=\Tilde{\lambda}^\aa\Tilde{a}(\Tilde{\lambda})\}$ is a new spectral curve associated to the set of polynomials $\Tilde{A}_1,\ldots,\Tilde{A}_M$. The equations
\begin{equation}
\label{fonctiontilde}
\Tilde{\xi}_m^2=\Tilde{A}_m(\Tilde{\lambda})=\Tilde{A}_m\circ \phi _m ^{-1}  (\Tilde{\lambda})=\Tilde{A}_m (z_m)\quad\hbox{ for }\quad m=1,\ldots M
\end{equation}
define a function $\Tilde{\xi}_m$ on the preimage of $\phi_m(W_m)\cap\bbP^1$ by the map $\Tilde{\lambda}$ into  $\Tilde{\Sigma}$. The function $\Tilde{\xi}_m$ extends to $\Tilde{U}_m=\{p\in\Tilde{\Sigma}\mid\Tilde{\lambda}(p)\in V_m\}$ and coincides on $\{p\in\Tilde{\Sigma}\mid\Tilde{\lambda}(p)\in V_m\setminus(W_1\cap\ldots\cap W_M\}$ with the unpertubed function $\xi_m$. The differential $d\Tilde{\xi}_m$ extends to a meromorphic differential on $\Tilde{\Sigma}$ of the form $\Tilde{\omega}=\frac{\Tilde{b}(\Tilde{\lambda})}{\Tilde{\lambda}^{\bb+\aa\bb}\Tilde{\nu}}d\Tilde{\lambda}$ with a unique polynomial $\Tilde{b}$. The roots of $\Tilde{b}$ are the values of $\Tilde{\lambda}$ at the roots of the derivatives of $\Tilde{A}_1,\ldots,\Tilde{A}_M$. The derivative of (\ref{fonctiontilde}) yields
\begin{align}\label{eq:dA}
2\Tilde{\xi}_m\tfrac{\partial}{\partial\Tilde{\lambda}}\Tilde{\xi}_m&=
\Tilde{A}_m'(z_m(\Tilde{\lambda}))z'_m(\Tilde{\lambda})=\frac{2\Tilde{\xi}_m\Tilde{b}(\Tilde{\lambda})}{\Tilde{\lambda}^{\bb+\aa\bb}\Tilde{\nu}}.
\end{align}
We say that polynomials $\Tilde{A}_1,\ldots,\Tilde{A}_M$ respect the reality condition if the corresponding involution~\eqref{eq:involution 1} or~\eqref{eq:involution 2} or~\eqref{eq:involution 3} lifts to an involution of the new copy of $\bbP^1$ and of $\Tilde{\Sigma}$ such that it acts on $\Tilde{\lambda}$, $\Tilde{\nu}$ and $\Tilde{\omega}$ in the same way as on $\lambda$, $\nu$ and $\omega$, respectively. In this case, the parameter $\Tilde{\lambda}$ is determined up to a real M\"obius transformation (see Section~\ref{sec:moebius}) and $(\Tilde{a},\Tilde{b})\in\calI(a,b)$.
\begin{theorem}\label{smooth param}
Let $(a,b)\in\reg^g$ and let $b/a$ has a first order pole at common roots of $a$ and $b$. The set of polynomials $\Tilde{A}_1,\ldots,\Tilde{A}_M$ with coefficients $\Tilde{a}_{m,2},\ldots,(\Tilde{a}_{m,m}-a_m)$ near zero which respect the reality condition parameterize the quotient of an open neighbourhood of $(a,b)$ in $\calI(a,b)$ modulo the M\"obius transformations described in Section~\ref{sec:moebius}. Together with the $2-\bb$ real parameters introduced in Section~\ref{sec:moebius} they parameterize a submanifold of $\pol^g$, which is an open neighbourhood of $(a,b)$ in $\calI(a,b)$. The tangent space $T_{(a,b)}\calI(a,b)$ is again isomorphic to the polynomials $c$ in Theorem~~\ref{thm:deformation}.
\end{theorem}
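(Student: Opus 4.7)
My plan is to show that the assignment $(\tilde{A}_1,\ldots,\tilde{A}_M)\mapsto[(\tilde{a},\tilde{b})]$ defined by the gluing and uniformization construction above gives a smooth chart on the quotient $\calI(a,b)/\mathrm{M\ddot{o}b}$ near $(a,b)$, and that reintroducing the $2-\bb$ Möbius parameters of Section~\ref{sec:moebius} upgrades it to a chart on $\calI(a,b)$ itself, realising it as a submanifold of $\pol^g$. The argument splits into four steps: (i) well-definedness and compatibility with the reality involution; (ii) verification that the image lies in $\calI(a,b)$; (iii) local bijectivity via a dimension count; (iv) identification of the tangent vectors with the polynomials $c$ of Theorem~\ref{thm:deformation}.

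For step (i), the compact Riemann surface obtained by pasting $\{\tilde{\xi}_m^2=\tilde{A}_m\}$ on $V_m$ to the unperturbed data on $\bbP^1\setminus\bigcup V_m$ along a tubular neighborhood of $\partial V_m$ admits, by uniformization, a global coordinate $\tilde{\lambda}$ agreeing with $\lambda$ at $\calS$ and unique up to a Möbius transformation preserving $\calS$ and the relevant involution. The imposed reality condition on the $\tilde{A}_m$ ensures the anti-holomorphic involution lifts to the glued surface, placing $(\tilde{a},\tilde{b})$ in the correct real form of $\reg^g$. For step (ii), I would verify conditions (i) and (ii) of Definition~\ref{isoperiodic} directly. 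Each branch point of $\tilde{a}$ inside $V_m$ is a root of $\tilde{A}_m$ and therefore a zero of $\tilde{\xi}_m$, so the integral of $\tilde{\omega}=d\tilde{\xi}_m$ along any path in $V_m$ between two such branch points vanishes. For a path crossing $\partial V_m$, the portion outside contributes the same as for the unperturbed $\omega$ by the gluing, while the portion inside is again a difference of $\tilde{\xi}_m$-values at the endpoints; this is precisely~\eqref{eq:const}. After refining $\calO$ so that each $V_m$ lies in some $O\in\calO$, isoperiodicity follows.

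For step (iii), the parameter count matches perfectly: the number of free complex coefficients in $(\tilde{A}_1,\ldots,\tilde{A}_M)$ after normalising $\tilde{a}_{m,1}=0$ is $\sum_{m=1}^M(d_m-1)$, which equals $\deg b=g+\bb+\aa\bb$ because $d_m-1$ is the order of $b$ at the unique root of $b$ in $V_m$ in both the case $a_m\ne0$ and the case $a_m=0$ (the latter uses precisely the hypothesis that $b/a$ has only first order poles, so that $d_m$ coincides with the order of $a$ at the common root). After the reality involution this matches the real dimension $(g+2+\aa\bb)-(2-\bb)=g+\bb+\aa\bb$ of $\calI(a,b)/\mathrm{M\ddot{o}b}$ predicted by Theorem~\ref{thm:deformation}. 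Injectivity is immediate since $\tilde{A}_m$ is recovered from $(\tilde{a},\tilde{b})$ on $V_m$ by $\tilde{\xi}_m^2=\tilde{A}_m$, and openness follows from the inverse function theorem once smooth dependence of $\tilde{\lambda}$ on the gluing data is established. For step (iv), differentiating $\tilde{\xi}_m^2=\tilde{A}_m$ in $t$ gives $2\xi_m\dot{\xi}_m=\dot{A}_m$, and~\eqref{eq:dot} identifies $\partial_t\xi_m$ with $c(\lambda)/(\lambda^{\aa\bb}\nu)$ for a unique polynomial $c$ of degree at most $g+1+\aa\bb$ in the fixed point set of the appropriate involution from~\eqref{eq:involutions c}; conversely every such $c$ arises this way, which yields the claimed isomorphism $T_{(a,b)}\calI(a,b)\cong\{c\}$.

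The main obstacle lies in step (iii), specifically proving smooth (in fact holomorphic) dependence of the uniformizing coordinate $\tilde{\lambda}$ on the parameters $\tilde{a}_{m,j}$, subject to normalisation at $\calS$ that breaks the Möbius ambiguity and is compatible with each of the three involutions~\eqref{eq:involution 1}--\eqref{eq:involution 3}. This is a standard measurable Riemann mapping / Ahlfors--Bers argument, but the bookkeeping between the marked set $\calS$, the three distinct reality structures, and the degeneration of the local model $\tilde{A}_m=z_m^{d_m}+a_m$ at common roots of $a$ and $b$ (where $a_m=0$) must be handled case by case.
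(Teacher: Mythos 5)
Your overall architecture (gluing, uniformization, reality compatibility, tangent-space identification) mirrors the paper's construction, and your steps (i)--(ii) track what the paper establishes just before the theorem. However, steps (iii) and (iv) as you have them contain genuine gaps that the paper resolves by a mechanism you omit.

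In step (iv) you write $2\xi_m\dot\xi_m=\dot A_m$, but this drops a term. The local coordinate $z_m$ on $V_m$ is itself a function of $\tilde\lambda$, which depends on the deformation parameter $t$, so differentiating $\tilde\xi_m^2=\tilde A_m(z_m(\tilde\lambda))$ gives
\[
2\tilde\xi_m\,\tfrac{\partial}{\partial t}\tilde\xi_m=\dot{\tilde A}_m(z_m(\tilde\lambda))+\tilde A_m'(z_m(\tilde\lambda))\,\dot z_m(\tilde\lambda),
\]
and dividing by $2\tilde\xi_m\,\partial_{\tilde\lambda}\tilde\xi_m$ yields precisely the paper's equation~\eqref{eq:A and c}. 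The $\dot z_m/z_m'$ term is not a nuisance term: it is holomorphic at the roots of $\tilde b$ (since $z_m'$ never vanishes on $W_m$), so the \emph{principal parts} at those roots come entirely from $\dot{\tilde A}_m/(\tilde A_m'\,z_m')$. This is exactly what lets the paper build a \emph{linear isomorphism} between $(\dot{\tilde A}_1,\ldots,\dot{\tilde A}_M)$ together with the infinitesimal M\"obius parameters and the polynomials $\tilde c$, by reading off the values of $\tilde{\lambda}^\bb\tilde c/\tilde b$ at the roots of $\tilde b$. Without the correction term the identification with $c$ is simply wrong, and the claim ``conversely every such $c$ arises this way'' is unsupported.

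In step (iii) the gap is more structural. You argue ``injectivity + matching dimension count + smooth dependence (Ahlfors--Bers) $\Rightarrow$ IFT $\Rightarrow$ submanifold,'' but this breaks down twice. First, the inverse function theorem needs the \emph{differential} to be an isomorphism, and a smooth injective map between manifolds of equal dimension need not be an immersion (compare $x\mapsto x^3$). Second, and more importantly, your dimension target $g+2+\aa\bb$ for $\calI(a,b)$ is borrowed from Theorem~\ref{thm:deformation}, which is proved only on $\ct^g$ where $\resultant(a,b)\neq 0$. Theorem~\ref{smooth param} is exactly about points $(a,b)\in\reg^g\setminus\ct^g$, so invoking that count is circular: the manifold structure and dimension of $\calI(a,b)$ at such points is precisely what needs to be established. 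The paper avoids both problems by directly computing the differential via~\eqref{eq:A and c}: the correction term $\dot z_m/z_m'$ determines $\dot{\tilde a}$ at the roots of $\tilde a$ inside $V_m$, equation~\eqref{eq:integrability 1} determines $\dot{\tilde a}$ at the remaining roots of $\tilde a$ (which are not roots of $\tilde b$), normalisation fixes $\dot{\tilde a}$, and~\eqref{eq:integrability 1} then yields $\dot{\tilde b}$. That explicit injectivity of the differential is the content you are missing, and it supplies steps (iii) and (iv) simultaneously; the Ahlfors--Bers machinery you flag as ``the main obstacle'' is not the route the paper takes.
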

\begin{proof}
We have seen that the polynomials $\Tilde{A}_1,\ldots,\Tilde{A}_M$ with coefficients near\-by the coefficients of $A_1,\ldots,A_M$ which respect the reality condition together with a choice of the parameter $\Tilde{\lambda}$ determine $(\Tilde{a},\Tilde{b})\in\calI(a,b)$. With an appropriate choice of $\Tilde{\lambda}$ the roots of $\Tilde{a}$ and $\Tilde{b}$ stay nearby the roots of $a$ and $b$. Conversely, for $(\Tilde{a},\Tilde{b})$ in a sufficiently small neigbourhood of $(a,b)$ in $\calI(a,b)$ the local functions $\Tilde{\xi}_m^2$ take at the roots of $\Tilde{\omega}$ the same values as unique polynomials $\Tilde{A}_1,\ldots,\Tilde{A}_M$. They define a new copy of $\bbP^1$ which is parameterized by $\Tilde{\lambda}$. The roots of $\Tilde{a}$ are the corresponding branch points and the roots of $\Tilde{b}$ coincide with the roots of $\Tilde{\omega}$. Since $\Tilde{\lambda}$ is determined up to the M\"obius transformations described in Section~\ref{sec:moebius} this proves the first statement.

It remains to prove that the map from $\Tilde{A}_1,\ldots,\Tilde{A}_M$ and the $2-\bb$ parameters for the M\"obius transformations described in Section~\ref{sec:moebius} into $\calI(a,b)$ is an immersion into $\pol^g$. First we establish a linear isomorphism from the tangent vectors $\dot{\Tilde{A}}_1,\ldots,\dot{\Tilde{A}}_M$ together with infinitesimal M\"obius transformations described in Section~\ref{sec:moebius} onto the polynomials $\Tilde{c}$ as described in Section~\ref{sec:whitham}. In a second step we show that all these data together uniquely determine the tangent vectors $(\dot{\Tilde{a}},\dot{\Tilde{b}})$. If we differentiate (\ref{fonctiontilde}) with $z_m (\Tilde{\lambda})=\phi^{-1}_m (\Tilde{\lambda})$ we obtain with~\eqref{eq:def c} for the corresponding functions with a tilde and with~\eqref{eq:dA}
\begin{align}
2\Tilde{\xi}_m\tfrac{\partial}{\partial t}\Tilde{\xi}_m&=
\dot{\Tilde{A}}_m(z_m(\Tilde{\lambda}))+\Tilde{A}_m'(z_m(\Tilde{\lambda}))\dot{z}_m(\Tilde{\lambda})=\frac{2\Tilde{\xi}_m\Tilde{c}(\Tilde{\lambda})}{\Tilde{\lambda}^{\aa\bb}\Tilde{\nu}},\nonumber\\\label{eq:A and c}
\frac{\Tilde{\lambda}^\bb\Tilde{c}(\Tilde{\lambda})}{\Tilde{b}(\Tilde{\lambda})}&=
\frac{\dot{\Tilde{A}}_m(z_m(\Tilde{\lambda}))}{\Tilde{A}'_m(z_m(\Tilde{\lambda}))z_m'(\Tilde{\lambda})}+\frac{\dot{z}_m(\Tilde{\lambda})}{z_m'(\Tilde{\lambda})}.
\end{align}
On the right-hand side of~\eqref{eq:A and c} the second term has no poles at the roots of $\Tilde{b}$, since the derivative $z_m'$ of the biholomorphic map $\Tilde{\lambda}\mapsto z_m(\Tilde{\lambda})$ on $W_m$ has no root. Hence the first term on the right hand side of~\eqref{eq:A and c} determines the values of $\Tilde{c}$ at the roots of $\Tilde{b}$ and vice versa. Since the degree of $\Tilde{c}$ differs by $1-\bb$ from the degree of $\Tilde{b}$, these values determines $\Tilde{c}$ up to infintesimal M\"obius transformation described in Section~\ref{sec:moebius}. This shows the first claim.

Equation~\eqref{eq:A and c} determines $\dot{z}_1,\ldots,\dot{z}_M$ in terms of $\dot{\Tilde{A}}_1,\ldots,\dot{\Tilde{A}}_M$ and $\Tilde{c}$. All together they determine the values of $\dot{\Tilde{a}}$ at all roots of $\Tilde{a}$ in $V_1,\ldots,V_M$. The other roots of $\Tilde{a}$ are no roots of $\Tilde{b}$ and the values of $\dot{\Tilde{a}}$ are determined by~\eqref{eq:integrability 1}. Since $\Tilde{a}$ is normalised, this determines $\dot{\Tilde{a}}$ and again with~\eqref{eq:integrability 1} also $\dot{\Tilde{b}}$ in terms of $\dot{\Tilde{a}}$ and $\Tilde{c}$. This proves the second claim.
\end{proof}
\section{ The Whitham flow with common roots of $a$ and $b$}\label{sec:common root}
In this section we preserve the geometric genus along the Whitham flow. For this pupose we assume that $a$ has only simple roots. For general pairs $(\Hat{a},\Hat{b})$ we remove the maximal number of higher order roots and obtain a pair $(a,b)$~\eqref{eq:double roots} with this porperty. If $\resultant(a,b)\ne0$, then Theorem~\ref{thm:deformation} applies. Otherwise the vector fields defined by polynomials $c$ and equation~\eqref{eq:integrability 1} have singularities. We describe how to continuously extend the flow through such singularities. For this purpose we construct an embedding of an open neighbourhood of $(a,b)$ in $\calI(a,b)$ into the corresponding $\calI(\Hat{a},\Hat{b})$ in Theorem~\ref{smooth param}. Common roots of $a$ and $b$ should be considered as higher order roots of $\Hat{a}$. We define  $(\Hat{a},\Hat{b})= (ap^2,bp)$ where $p$ is the polynomial whose roots coincide with roots of $f_O$ counted with multiplicity at common roots of $a$ and $b$. Then  $\Hat{f}=f/p$ has no roots at common roots of $a$ and $b$ and Theorem~\ref{smooth param} applies.

The order $d_m=2\ell_m+1$ of a root of $\Hat{a}$ at a common root of $a$ and $b$ is odd and at least three. On the image of $\calI(a,b)\hookrightarrow\calI(\Hat{a},\Hat{b})$ the geometric genus is preserved together with the number of odd order roots of $\Hat{a}$. This means that those $\Tilde{A}_m$ have exactely one odd order root whose $V_m$ contain a common root of $a$ and $b$. Hence we replace the corresponding parameters~\eqref{eq:deformed pol A} by
\begin{equation}\label{eq:embedding}
\Tilde{A}_m(z_m)=(z_m-2\alpha_{m})p_m^2(z_m)\mbox{ with }p_m(z_m)=z_m^{\ell_m}+\beta_{m,1}z^{\ell_m-1}+\!\ldots\!+\beta_{m,\ell_m}\mbox{ and }\beta_{m,1}=\alpha_{m}.\end{equation}
Since the sum of the zeroes of $\Tilde{A}_m$ is equal to zero, the odd order root $2\alpha_m$ is given by $\beta_{m,1}=\alpha_m$. Due to Theorem~\ref{smooth param} the corresponding elements of $\calI(\Hat{a},\Hat{b})$ depend smoothly on $\beta_{m,1}=\alpha_m$ and $\beta_{m,2},\ldots,\beta_{m,\ell_m}$. By definition of~\eqref{eq:embedding} these elements of $\calI(\Hat{a},\Hat{b})$ are of the form~\eqref{eq:double roots} $(\Tilde{p}^2\Tilde{a},\Tilde{p}\Tilde{b})\in\calI(\Hat{a},\Hat{b})$ with uniquely determined $(\Tilde{a},\Tilde{b})\in\calI(a,b)$ with non-vanishing discriminant $\disc(\Tilde{a})$ and normalised $\Tilde{p}$. Both depend smoothly on the parameters in~\eqref{eq:embedding}. As in Theorem~\ref{smooth param} these parameters describe an open neighbourhood of $(a,b)$ in $\calI(a,b)$ as a subset of $\calI(\Hat{a},\Hat{b})$ which is neither immersed nor a submanifold.
\begin{theorem}\label{commonroots}
Let $(a,b)\in\reg^g\setminus\ct^g$ have non-zero discriminant $\disc(a)\ne 0$ and common roots of $a$ and $b$ away from the fixed point set of~\eqref{eq:involution 1} or \eqref{eq:involution 2}, i.e.\ non-real for~\eqref{eq:involution 1} and non-unital for~\eqref{eq:involution 2}.  Let $c$ be a smooth map from an open neighbourhood $U\subset\pol^g$ of $(a,b)$ into the polynomials of degree $g+1+\aa\bb$ which are fixed points of the corresponding involution~\eqref{eq:involutions c}, such that $c(a,b)$ does not vanish at the common roots of $a$ and $b$. Then there exists for some $\epsilon>0$ a continuous family $(a_t, b_t)_{t\in (-\epsilon,\epsilon)}$ with $(a_0,b_0)=(a,b)$ which is on $(-\epsilon,\epsilon)\setminus\{0\}$ a smooth integral curve of the vector field in Theorem~\ref{thm:deformation} in $\ct^g$.
\end{theorem}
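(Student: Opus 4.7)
The approach is to lift the Whitham flow to the smooth auxiliary moduli space $\calI(\hat{a},\hat{b})$ constructed in this section, pull back to the smooth parameter space underlying the embedding \eqref{eq:embedding}, and recover a continuous family in $\calI(a,b)$ by integrating the resulting regularized vector field and then reparameterizing time.

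By Theorem~\ref{smooth param} applied to $(\hat{a},\hat{b})=(ap^2,bp)$, a neighbourhood of $(\hat{a},\hat{b})$ in $\calI(\hat{a},\hat{b})$ is a smooth manifold, and the parameterization \eqref{eq:embedding} realizes the image of a neighbourhood of $(a,b)$ in $\calI(a,b)$ as the smooth image of $(\alpha_m,\beta_{m,j})$ together with the remaining $\tilde{A}_{m'}$-parameters; the induced map to $(\tilde{a},\tilde{b})\in\calI(a,b)$ is smooth and sends the origin to $(a,b)$. Substitution of $(\hat{a},\hat{b},\hat{c})=(ap^2,bp,cp)$ into \eqref{eq:integrability 1} yields a common factor $p^3$ on both sides, so the Whitham equations for $(\hat{a},\hat{b},\hat{c})$ and $(a,b,c)$ are equivalent, and a Whitham integral curve in $\calI(a,b)$ with respect to $c$ lifts to one in $\calI(\hat{a},\hat{b})$ with respect to $\hat{c}=cp$.

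Following the discussion after Theorem~\ref{thm:deformation}, write the Whitham vector field on $\ct^g$ as $V=V_h/L$ with $L$ the denominator (a polynomial vanishing on the singular locus) and $V_h$ holomorphic; integral curves of $V_h$ are Whitham integral curves under the reparameterization $dt/d\tau=L$. Using \eqref{eq:A and c} with $\hat{c}=cp$ and the explicit structure of \eqref{eq:embedding}, the pullback of $V_h$ to the parameter space is smooth and nonzero at the origin: the vanishing of $L$ cancels exactly against the degeneration of the Jacobian of \eqref{eq:embedding}, and the hypothesis that $c(a,b)$ does not vanish at the common roots of $a$ and $b$ prevents vanishing of the leading term. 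Integrating produces a smooth curve $\tau\mapsto(\alpha_m(\tau),\beta_{m,j}(\tau))$ with nonzero initial velocity through the origin; its projection $\tau\mapsto(\tilde{a}(\tau),\tilde{b}(\tau))$ is a smooth curve in $\calI(a,b)$ passing through $(a,b)$ at $\tau=0$, and $t(\tau)=\int_0^\tau L(\tilde{a}(s),\tilde{b}(s))\,ds$ has a critical point at $\tau=0$ of some order $n\geq 2$, with leading behaviour $t(\tau)=C_0\tau^n+O(\tau^{n+1})$ for a constant $C_0\neq 0$.

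Since the common roots are non-real (respectively non-unital) by hypothesis, the parameters $\alpha_m$ are genuinely complex-valued and $\tau$ takes values in a complex neighbourhood of $0\in\bbC$. Choosing a continuous branch of $(t/C_0)^{1/n}$ on $t\in(-\epsilon,\epsilon)$ yields a continuous $\tau\colon(-\epsilon,\epsilon)\to\bbC$ with $\tau(0)=0$ and $\tau(t)\neq 0$ for $t\neq 0$; the family $(a_t,b_t)=(\tilde{a}(\tau(t)),\tilde{b}(\tau(t)))$ is then continuous with $(a_0,b_0)=(a,b)$, lies in $\ct^g$ for $t\neq 0$, and is a smooth integral curve of the Whitham vector field by Theorem~\ref{thm:deformation} together with the reparameterization. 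The main obstacle is verifying the claimed exact cancellation in the pullback step: one must track how the factor $p$ in the embedding interacts with the denominator $L$ of the Whitham vector field to produce a smooth, nonvanishing $V_h$ at the origin of the parameter space, and extract the precise order $n$ governing the square-root-type continuation through $t=0$.
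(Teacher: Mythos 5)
Your overall architecture (lift to $\calI(\Hat{a},\Hat{b})$ via Theorem~\ref{smooth param}, feed $\Hat{c}=cp$ into the Whitham system, pull the holomorphic vector field back to the parameter space of~\eqref{eq:embedding}, then reparameterize time) matches the paper, and the observation that~\eqref{eq:integrability 1} is invariant under $(a,b,c)\mapsto(ap^2,bp,cp)$ is a correct and clean way to justify the lift. However, the pivotal claim in the middle of your argument --- that the pullback of the holomorphic vector field to the $(\alpha_m,\beta_{m,j})$-space is \emph{smooth and nonzero} at the origin --- is not merely an unverified step, it is false, and this is precisely where the paper's proof diverges and does something essentially different.

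The paper shows that the ODE in the $(\alpha_m,\beta_{m,j})$-coordinates of~\eqref{eq:embedding} is genuinely singular at the origin and requires a further \emph{blow-up} $z_m\mapsto w_m=z_m/\alpha_m$, $p_m\mapsto q_m$. After that blow-up, the correct renormalization factor turns out to be $\Gamma=\alpha^{\ell+1}$ (locally), i.e.\ \emph{one power higher} than the bare denominator you identify: with only $\alpha^{\ell}$ the $\dot\alpha$-equation becomes regular, but the complementary $\dot q$-equation has a factor $\dot\alpha/(2\alpha)$ that diverges, so the vector field is not smooth off the would-be trajectory. With the extra power $\Gamma=\alpha^{\ell+1}$ (globally $\Gamma=s^N$ after the polar-type reparameterization $\alpha_m=e^{i\theta_m}r_m s^{N/(\ell_m+1)}$), the renormalized vector field is smooth but \emph{vanishes} at the origin: it has a hyperbolic singularity whose linearization has both a positive and a negative eigenvalue (with opposite signs in the $s$-direction versus the $\theta_m,r_m$-directions). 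The desired curve is then not a single regular integral curve passing through the origin; the paper glues a trajectory on the stable manifold to one on the unstable manifold via the Stable Manifold Theorem, and only then argues that finitely much original time is used because $\Gamma=s^N$ with $N\ge 2$. Your "choose a branch of $(t/C_0)^{1/n}$" step also conflates a complex-valued parameter $\alpha_m$ with a complex reparameterization of the (real) time variable --- the paper keeps time real and instead uses the reality of the angles $\theta_m$ and radii $r_m$ to set up the saddle. You flagged the cancellation step as the main obstacle, and indeed that is where the proof lives: without the blow-up, the identification of $\Gamma=s^N$, the verification of Lemma~\ref{pol}'s initial polynomial $\Bar q$, and the Stable Manifold Theorem, the existence of trajectories entering and leaving the singularity is not established.
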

\begin{proof}
We denote the composition of the map from the coefficients of~\eqref{eq:embedding} to $(\Tilde{a},\Tilde{b})\in\calI(a,b)$ with the map $c$ by $\tilde{c}$. Due to Theorem~\ref{smooth param} the product $\Hat{c}=\Tilde{p}\Tilde{c}$ defines a vector field on the image of the map $\calI(a,b)\hookrightarrow\calI(\Hat{a},\Hat{b})$. Since $\Hat{c}$ is a lift of the map $\Tilde{c}$ to the image of $\calI(a,b)\hookrightarrow\calI(\Hat{a},\Hat{b})$, this vector field is tangent to this image. Unfortunately this image is not a submanifold of $\calI(\Hat{a},\Hat{b})$, which is due to Theorem~\ref{smooth param} a manifold. Equation~\eqref{eq:A and c} lifts this vector field to an ODE for the parameters in~\eqref{eq:embedding}. At the initial values of the coefficients in~\eqref{eq:embedding} which are mapped to $(\Hat{a},\Hat{b})$ the map from the parameters in~\eqref{eq:embedding} into $\calI(\Hat{a},\Hat{b})$ is no immersion, and the ODE turns out to be singular.

%We use the description of the smooth moduli space $\reg^{g+n}$ in Theorem~\ref{smooth param}. For a given map $c$ on $U\subset\pol^g$, we shall first consider the embedding $U\hookrightarrow\pol^{g+n}$ and secondly define a map $\Hat{c}=pc$ on the image of the embedding. This definition assures that along any integral curve of $\Hat{c}$ in $\pol^{g+n}$, all polynomials $\Tilde{A}_m$ keep the form  $\Tilde{A}_m =(z-2\alpha_m)p^2_m(z)$ where $\alpha_m$ is a moving branch point along an integral curve of the vector field $\Hat{c}$ nearby a common root of $a$ and $b$ located at zero in this coordinate. In the following we will parameterize integral curves by moving the roots $\alpha_1,\ldots,\alpha_m$ and coefficients of the polynomials $p_1,\ldots,p_m$ of degree $\ell_1,\ldots,\ell_m$.

Next we shall modify the ODE with respect to two different aspects. Firstly we blow up the coordinates in~\eqref{eq:embedding}. Secondly we multiply the vector field with a real function $\renorm$ depending on the blown up coefficients in~\eqref{eq:embedding} in such a way that the new vector field becomes smooth. The new vector field will have a root at the initial value. Due to the second modification the integral curves are reparameterized, but do not change as subsets in $\pol^g$. Due to the first modification the linearisation of the vector field at the root is non-trivial. We want to find two trajectories of the vector field, one moving in and one moving out of the initial singularity. For this purpose we calculate the first derivative of the vector field and find non-trivial stable and unstable eigenspaces. By the Stable Manifold Theorem there then exist integral curves moving in and out of the singularity.

We collect different non-real common roots of $a$ and $b$ and prove that the linearized vector field has non-empty stable and unstable eigenspaces. This means that the first derivative of the vector field at a common root of $a$ and $b$ has non-zero eigenvalues with positive and negative real parts.

First we consider each common root of $a$ and $b$ separately. In this preliminary consideration we pick out one set $V_m$ containing a common root of $a$ and $b$ together with the polynomials $p_m$, $\Tilde{A}_m(z_m)$ and the local parameter $z_m$ as introduced in Section~\ref{sec:param}. In order to simplify notation we drop the index $m$. The following substitution $z\mapsto w=\frac{z}{\alpha}$ and $p(z)\mapsto q(w)$ blows up the coefficients in~\eqref{eq:embedding}. Here $q$ is the unique polynomial with
\begin{equation}\label{eq:def q}
\Tilde{A}(z)=(z-2\alpha)p^2 (z)=\alpha ^{2\ell+1} (w-2)q^2(w)\quad\mbox{with}\quad w=\tfrac{z}{\alpha}.
\end{equation}
As in~\eqref{eq:embedding} $2\alpha\in\C$ is the unique odd order branch point in $V$, and $\ell$ is the degree of the polynomial $q$ whose highest and second highest coefficients are equal to $1$. For $t\to 0$ we have $\alpha\to0$ and $\alpha^\ell q(\frac{z}{\alpha})\to z^\ell$ for bounded $q$. For $\alpha =0$ we have $\Tilde{A}(z)=z^{2\ell+1}$ independently of the polynomial $q$. Hence we can choose an appropriate initial $q$ at $t=0$. Let us calculate the first term of the right-hand side of~\eqref{eq:A and c} in terms of the coefficients of $q$ and $\alpha$:
\begin{align*}
\dot{\Tilde{A}}(z)&=\dot\alpha\alpha^{2\ell}q(\tfrac{z}{\alpha})\left((2\ell\!+\!1)(\tfrac{z}{\alpha}\!-\!2)q(\tfrac{z}{\alpha})-(\tfrac{z}{\alpha})q(\tfrac{z}{\alpha})-2(\tfrac{z}{\alpha})(\tfrac{z}{\alpha}\!-\!2)q'(\tfrac{z}{\alpha})\right)+2\alpha^{2\ell+1}q(\tfrac{z}{\alpha})(\tfrac{z}{\alpha}\!-\!2)\dot{q}(\tfrac{z}{\alpha}),\\
\Tilde{A}' &=\alpha^{2\ell}q(\tfrac{z}{\alpha})\left(q(\tfrac{z}{\alpha})+2(\tfrac{z}{\alpha}-2)q'(\tfrac{z}{\alpha})\right),\\
\frac{\dot{\Tilde{A}}}{\Tilde{A}'}&=\frac{\dot{\alpha}\big((2\ell\!+\!1)(w\!-\!2)q(w)\!-\!wq(w)\!-\!2w(w\!-\!2)q'(w)\big)\!+\!2\alpha(w\!-\!2)\dot{q}(w)}{q(w)+2(w-2)q'(w)}.
\end{align*}
Now $\Tilde{A}'$ has $2\ell$ roots. Besides the $\ell$ double roots of $\Tilde{A}$ it has $\ell$ additional roots, which are equal to the roots of the polynomial
$$\tfrac{\alpha ^\ell}{2\ell+1}\left(q\left(\tfrac{z}{\alpha}\right)+2\left(\tfrac{z}{\alpha}-2\right)q'\left(\tfrac{z}{\alpha}\right)\right).$$
This polynomial has highest coefficient $1$. Locally in $V$, the function $\frac{\Tilde{\lambda}^\bb\Hat{c}(\Tilde\lambda)}{\Hat{b}(\Tilde\lambda)}$ may be uniquely decomposed into a rational function depending on $z\in W\subset\C$, which vanishes as $z\to\infty$, and a holomorphic function $h(z)$ nearby the roots of $\Hat{b}$. Hence there exists a unique polynomial $C(z)$ of degree $\ell-1$, such that in $V\simeq W$, the Laurent decomposition gives
\begin{equation}\label{eq:def C}
\frac{\Tilde{\lambda}^\bb\Hat{c}(\Tilde{\lambda})}{\Hat{b}(\Tilde{\lambda})}=\frac{C(z)}{\frac{\alpha^\ell}{2\ell+1}\left(q\left(\frac{z}{\alpha}\right)+2\left(\frac{z}{\alpha}-2\right)q'\left(\frac{z}{\alpha}\right)\right)}+h(z)\,.
\end{equation}
Therefore we obtain the differential equation in $V_m$
$$\frac{\dot{\Tilde{A}}(z)}{\Tilde{A}'(z)} =\frac{\renorm C(z)}{\frac{\alpha^\ell}{2\ell+1}\left(q\left(\frac{z}{\alpha}\right)+2\left(\frac{z}{\alpha}-2\right)q'\left(\frac{z}{\alpha}\right)\right)}\,.$$
This equation is equivalent to
\begin{gather}\label{eq:ode 2}
\dot{\alpha}\left((2\ell\!+\!1)(w\!-\!2)q(w)-wq(w)-2w(w\!-\!2)q'(w)\right)+2\alpha(w\!-\!2)\dot{q}(w)=\frac{(2\ell\!+\!1)\renorm C(\alpha w)}{\alpha ^\ell}.
\end{gather}
We have $\alpha\to 0$ and $\alpha^\ell q(\frac{z}{\alpha})\to z^\ell$ as $t\to 0$. We should choose the initial $q$ in such a way, that $\frac{\dot{q}}{\dot{\alpha}}$ stays bounded. Otherwise the flow stays in the exceptional locus with $\alpha=0$, all whose points correspond to the same $\Tilde{A}=z^{2\ell+1}$. Consequently, we can neglect for small $t$ the second term of order $\mathbf{O}(\alpha)$ on the left-hand side of~\eqref{eq:ode 2}. Since $c(a,b)$ does not vanish at the common roots of $a$ and $b$, we have $C(0)\neq0$ on the right hand side. Therefore in this preliminary consideration of a single common root of $a$ and $b$ the vector field is multiplied by the function $\renorm=\alpha^{\ell+1}$ and the right-hand side of~\eqref{eq:ode 2} converges in the limit $\alpha\to 0$ to a constant independent of $w$. Consequently for the intial value $\Bar{q}$ of $q$ the following polynomial
\begin{equation}\label{eq:start pol}
(2\ell+1)(w-2)\Bar{q}(w)-w\Bar{q}(w)-2w(w-2)\Bar{q}'(w)
\end{equation}
is a non-vanishing constant. Let us now calculate this initial value $\Bar{q}$:
\begin{lemma}\label{pol}
For each $\ell\in\N$ there exists a unique polynomial $\Bar{q}$ of degree $\ell$ with two highest coefficients $1$, such that the polynomial~\eqref{eq:start pol} equals a constant $K$. This polynomial is the polynomial part of $w^\ell(1-\frac{2}{w})^{-1/2}$.
\end{lemma}
\begin{proof}
For polynomials $\Bar{q}$ of degree $\ell$ with highest coefficient $1$ the polynomial~\eqref{eq:start pol} has degree at most $\ell$. The condition that~\eqref{eq:start pol} is constant yields $\ell-1$ linear equations on the coefficients of $\Bar{q}$. We can solve these equations uniquely by first defining the coefficient of $w^{\ell-1}$ in $\Bar{q}$ such that the coefficient of $w^\ell$ of the polynomial under consideration vanishes and then the lower order coefficients in the inverse order of their power. If we insert for $\Bar{q}(w)=w^\ell (1-\frac{2}{w})^{-1/2}$, then $\Tilde{A}=\alpha ^{2\ell+1} (\frac{z}{\alpha}-2)q^2(\frac{z}{\alpha})$ is equal to $z^{2\ell+1}$ and independent of $\alpha$. Consequently the former expression vanishes. Moreover, if $\Bar{q}(w)$ is the polynomial part of $w^\ell (1-\frac{2}{w})^{-1/2}$, then $\Tilde{A}(z)-z^{2\ell+1}$ is a polynomial of degree $\ell+1$ with respect to $z$ and $w=\frac{z}{\alpha}$. Differentiating with respect to $\alpha$ shows that the polynomial under consideration is constant.
\end{proof}

{\emph{Continuation of the proof of Theorem~\ref{commonroots}}:} For this polynomial $\Bar{q}$ we have
\begin{align*}
(2\ell\!+\!1)(w\!-\!2)\Bar{q}(w)\!-\!w\Bar{q}(w)\!-\!2w(w\!-\!2)\Bar{q}'(w)&=\binom{-\frac{1}{2}}{\ell}(-2)^{\ell+1}(2\ell+1)\\&=\frac{1\cdot3\cdots(2\ell\!-\!1)(2\ell\!+\!1)}{\ell!}(-2).
\end{align*}
At the initial value the ODE takes the form
\begin{align}\label{eq:alpha gamma}
\dot{\alpha}&=\frac{-(2\ell+1)\cdot\ell!\renorm(C(0)+\mathbf{O}(\alpha))}{2\alpha ^\ell1\cdot3\cdots(2\ell-1)(2\ell+1)},&\dot{q}(w)&=\frac{\dot{\alpha}}{2\alpha}(h(w)+\mathbf{O}(\alpha))
\end{align}
where $h(w)$ is the polynomial part of
$$2w\left(q'(w)-\Bar{q}'(w)\right)-(2\ell+1)\left(q(w)-\Bar{q}(w)\right)+\left(1-\tfrac{2}{w}\right)^{-1}(q(w)-\Bar{q}(w))$$
and $\Bar{q}(w)$ is the polynomial in the lemma above. The derivative $\dot{\alpha}$ has to be of order $\mathbf{O}(\alpha)$ in order to compensate the nominator of the right hand side of $\dot{q}$ in~\eqref{eq:alpha gamma}. This confirms $\renorm=\alpha^{\ell+1}$ in order to compensate the power of $\alpha$ in the nominator. Consequently $\dot{\alpha}$ is of order $\mathbf{O}(\alpha)$ and $\dot{q}$ is of order $\mathbf{O}(q-\Bar{q})$ for small $t$. The linearized vector field has block diagonal form with respect to the decomposition of $\alpha$ and $q$. Since we are interested in trajectories on which $\dot{q}/\dot{\alpha}$ is bounded, we restrict to the eigenspaces of the $\dot{\alpha}$ equation. For these eigenspaces $\dot{\alpha}$ is non-zero while $\dot{q}$ vanishes.

It remains to collect all equations corresponding to $m=1,\ldots,M$, and to find eigenvalues of the equation~\eqref{eq:alpha gamma} with non-zero real parts. We decorate the polynomials $C$ in ~\eqref{eq:def C} with the corresponding indices $m=1,\ldots,M$. We end up with equations of the form
$$\dot{\alpha}_m=\frac{\renorm C_m(1+\mathbf{O}(\alpha_1,\ldots,\alpha_M))}{\alpha_m^{\ell_m}}$$
where $\ell_m$ is the degree of $q_m$~\eqref{eq:def q} (with index $m$). Let $N\ge2$ be the least common multiple of $\ell_1+1,\ldots,\ell_M+1$. We blow up again and reparameterize
\begin{align*}
\alpha_1&= e^{\ci\theta_1}s^{\frac{N}{\ell_1+1}},&\alpha_m&=e^{\ci\theta_m}r_m s^{\frac{N}{\ell_m+1}}\mbox{ for }m>1
\end{align*}
with real $s,r_m,\theta_m$  to describe the evolution of $\alpha_1,\ldots,\alpha_M$. Consequently the term $\mathbf{O}(\alpha_1,\ldots,\alpha_m)$ is of order $\mathbf{O}(s^p)$ with $p=\min\{\frac{N}{\ell_1+1},\ldots,\frac{N}{\ell_M+1}\}$. Here we assume that the root of $b$ of index $m=1$ is a common root of $a$ and $b$. With the initial values $\Bar{s}_1=0$ of $s_1$ and $\Bar{\theta}_1$ of $\theta_1$ we have
$$\dot{\alpha}_1=e^{\ci\theta_1}s^{\left(\frac{N}{\ell_1+1}-1\right)}\left(\tfrac{N}{\ell_1+1}\dot{s}+\ci s\dot{\theta}_1\right).$$
Now we choose $\renorm=s^N$. Then we obtain for $m=1$ the equation
$$e^{\ci\theta_1}s^{\left(\frac{N}{\ell_1+1}-1\right)}\left(\tfrac{N}{\ell_1+1}\dot{s}+\ci s\dot{\theta}_1\right)=\frac{C_1s^{\frac{N}{\ell_1+1}}}{e^{\ci\ell_1\theta_1}}(1+\mathbf{O}(s)).$$
This gives the system
\begin{align*}
\dot\theta_1&=\Im\bigl(C_1e^{-\ci(\ell_1+1)\theta_1}\bigr)+\mathbf{O}(s^p),&\dot{s}=&\tfrac{\ell_1+1}{N}s\;\Re\bigl(C_1e^{-\ci(\ell_1+1)\theta_1}\bigr)+\mathbf{O}(s^{p+1}).
\end{align*}
We choose suitable $\Bar{\theta}_1$ to get $\Im( C_1 e^{-\ci(\ell_1+1)\Bar{\theta}_1})=0$. Recall that $C_1\neq 0$ by choice of $c$ having no roots at the roots of $\resultant(a,b)$ in $V_1$. This implies
$$\tfrac{\partial}{\partial\theta_1}\Im\left(C_1e^{-\ci(\ell_1+1)\theta_1}\right)_{|\theta_1=\Bar{\theta}_1}=-(\ell_1+1)\Re\left(C_1e^{-\ci(\ell_1+1)\Bar{\theta}_1}\right)=(\ell_1+1)\Hat{C}_1\neq 0\,.$$
We have to choose the initial value $\Bar{\theta}_1$ at the starting point in the exceptional locus of the blow up in such a way that $\Hat{C}_1=\Re\left(C_1e^{-\ci(\ell_1+1)\Bar{\theta}_1}\right)=\pm|C_1|$ has different signs. Then there exist different solutions with negative and positive eigenvalues of the corresponding linearized equation
\begin{align*}
\dot{\theta}_1&=(\ell_1+1)\Hat{C}_1\theta_1\,,&\dot{s}&=\tfrac{\ell_1+1}{N}s\;\Re\,\bigl(C_1 e^{-(\ell_1+1)\Bar{\theta}_1}\bigr)=-\tfrac{(\ell_1+1)\Hat{C}_1}{N}s.
\end{align*}
For $m >1$ we reparameterize $\alpha_m$ by the parameters $(s,r_m,\theta_m)$ and get with the initial values $(0,\Bar{r}_m,\Bar{\theta}_m)$ with $\Bar{r}_m\ne0$ the equation
\begin{align*}
\dot{\alpha}_m&=\left(\ci r_m\dot{\theta}_m s+\dot{r}_ms+\tfrac{N}{\ell_m+1}r_m\dot{s}\right)e^{\ci\theta_m}s^{\frac{N}{\ell_1+1}-1}\\
&=\frac{C_ms^N(1+\mathbf{O}(s^p))}{s^{\frac{\ell_m N}{\ell_m+1}}e^{\ci\ell_m\theta_m}r_m^{\ell_m}}=C_m e^{-\ci\ell_m\theta_m}r_m^{-\ell_m}s^{\frac{N}{\ell_m+1}}(1+\mathbf{O}(s^p)).
\end{align*}
This implies
$$\ci r_m\dot{\theta}_ms+\dot{r}_ms+\tfrac{N}{\ell_m+1}r_m\dot{s}=C_me^{-\ci(\ell_m+1)\theta_m}r_m^{-\ell_m}s(1+\mathbf{O}(s^p))\,.$$
Hence we study the system
\begin{align*}
\dot{\theta}_m&=\Im\left(C_m e^{-\ci(\ell_m+1)\theta_m}\right)r_m^{-(\ell_m+1)}+\mathbf{O}(s^p)\\
\dot{r}_m &=\Re\left(C_m e^{-\ci(\ell_m +1)\theta_m}\right)r_m^{-\ell_m}-\tfrac{\ell_1+1}{\ell_m+1}\Re\left(C_1e^{-\ci (\ell_1+1)\theta_1}\right)r_m +\mathbf{O}(s^p).
\end{align*}
Now we choose $\Bar{\theta}_m$ in such a way that $\Im(C_me^{-\ci(\ell_m+1)\Bar{\theta}_m})=0$ and $\Hat{C}_m=-\Re(C_me^{-\ci (\ell_m+1)\Bar{\theta}_m})=\pm|C_m|$ has the same sign as $\Hat{C}_1$: For a choice of $\Bar{\theta}_1$, and then a sign for $\Hat{C}_1$, we fix a choice of $(\Bar{\theta}_2,\ldots,\Bar{\theta}_M)$ in such a way that $\Hat{C}_1,\ldots,\Hat{C}_M$ have the same signs. Finally, we choose $\Bar{r}_m>0$ which satisfy
$$\Re\left(C_me^{-\ci(\ell_m+1)\Bar{\theta}_m}\right)\Bar{r}_m^{-(\ell_m+1)}-\tfrac{\ell_1+1}{\ell_m+1}\Re\left(C_1e^{-\ci(\ell_1+1)\Bar{\theta}_1}\right)=0.$$
The linearized system is for $m=2,\ldots,M$
\begin{align*}
\dot{\theta}_m&=(\ell_m+1)\Hat{C}_m\Bar{r}_m^{-(\ell_m+1)}\theta_m\,,&\dot{r}_m&=\tfrac{(\ell_m+1)}{r^{\ell_m+1}}\Hat{C}_m(r_m-\Bar{r}_m).
\end{align*}
This shows the existence of initial values with different signs of the eigenvalues for the linearized system. The Stable Manifold Theorem~\cite[Theorem~9.3]{teschl2012} guarantees the existence of trajectories moving in and out of the singularity. Since the exponent $N$ of the factor $\Gamma=s^N$ is at least $2$, the trajectory of the original vector field moves in and out in finite time.
\end{proof}
\begin{remark}
We assume that the common roots of $a$ and $b$ are non-real, i.e.\ not fixed points of the involution~\eqref{eq:involution 1} or \eqref{eq:involution 2}. At real common roots the coefficients of $A_m$~\eqref{eq:pol A}, $\Tilde{A}_m$~\eqref{eq:embedding} and of $\Bar{q}$ in Lemma~\ref{pol} are real. In this case it can happen that there only exist integral curves moving in or out of the singularity after multiplying the vector field induced by $c$ with $\pm1$.
\end{remark}
%
%%%%%%%%%%%%%%%%%%%%%%%%%%%%%%
%%%%%%%%%%%%%%%%%%%%%%%%%%%%%
%%%%%%%%%%%%%%%%%%%%%%%%%%%%
%%%%%%%%%%%%%%%%%%%%%%%%%%%%
%%%%%%%%%%%%%%%%%%%%%%%%%%%%

%\bibliographystyle{amsplain}
%\bibliography{ref}

\begin{thebibliography}{10}

%\bibitem{abresch} U.~Abresch, \emph{Constant mean curvature tori in terms of elliptic functions}, J. Reine U. Angew Math. \textbf{374} (1987), 169--192.

\bibitem{bab} M.V.\ Babich, \emph{Real finite-gap solutions of equations connected with sine-Gordon equation}, St.\ Petersburg Math.\ J.\ {\bf 2} (1991), 507--521.

\bibitem{bob} A.I.~Bobenko, \emph{All constant mean curvature tori in {$\bbR^3$}, {$\mathbb{S}^3$}, {$\mathbb{H}^3$} in terms of theta-functions}, Math.\ Ann.\ \textbf{290} (1991), 209--245.
  
%\bibitem{BurP_adl} F.~E. Burstall and F.~Pedit, \emph{Harmonic maps via {A}dler-{K}ostant-{S}ymes theory}, Harmonic maps and integrable systems, Aspects of Mathematics, vol.\ E23, Vieweg, 1994.

%\bibitem{BurP:dre}\bysame, \emph{Dressing orbits of harmonic maps}, Duke Math. J. \textbf{80} (1995), no.~2, 353--382.

%\bibitem{CS} E.\ Carberry, M.U.\ Schmidt, \emph{Universal deformations of spectral curves}, manuscript.

%\bibitem{cheng} S.-Y. Cheng, \emph{Eigenfunctions and nodal sets}, Comment. Math. Helv.\ \textbf{51} (1976), no.~1, 43--55.

\bibitem{DKN} B.~A.~Dubrovin, I.~M.~Krichever, S.~P.~Novikov: Integrable systems I. In: V.~I.~ Arnold, S.~P.~Novikov (eds.) Dynamical Systems IV. Encyclopedia of Mathematical Sciences {\bf 4}, pp. 173-280. Springer, Berlin, Heidelberg (1990).
  
%\bibitem{DPW} J.~Dorfmeister, F.~Pedit, and H.~Wu, \emph{Weierstrass type representation of harmonic maps into symmetric spaces}, Comm. Anal. Geom. \textbf{6} (1998), no.~4, 633--668.

%\bibitem{[S-T]} R.~Sa Earp and E.~Toubiana, \emph{Screw motion surfaces in {$\mathbf{H}^2\times\bbR$} and {$\bbS^2\times\bbR$}}, Illinois J. Math. \textbf{49} (2005), no.~4, 1323--1362.

%\bibitem{sturm} M.~S.~P. Eastham, \emph{Results and problems in the spectral theory of periodic differential equations}, Springer, Berlin, 1975.

\bibitem{Fr} I.~B.~ Frenkel, \emph{Orbital theory for affine Lie algebras}, Invent.\ Math.\ {\bf 77}, 1984, 301-352.

\bibitem{Fo} O.~Forster, \emph{Lectures on {R}iemann surfaces}, Graduate Texts in Mathematics, vol.~81, Springer-Verlag, New York, 1991.


\bibitem{grinevich-schmidt1995}
P.G.~Grinevich and M.U.~Schmidt, \emph{Period preserving nonisospectral flows
  and the moduli space of periodic solutions of soliton equations}, Phys. D
  \textbf{87} (1995), no.~1-4, 73--98.

%\bibitem{hau} L.~Hauswirth, \emph{Minimal surfaces of {R}iemann type in three-dimensional product manifolds}, Pacific J. Math. \textbf{224} (2006), no.~1, 91--117.

\bibitem{hks1} L.\ Hauswirth, M.\ Kilian, and M.U.\ Schmidt, \emph{Finite type minimal surfaces in $\bbS^2\times\bbR$}, Illinois Journal of Math.\ {\bf 87}, 2013, 697-741.

\bibitem{hks2}\bysame, \emph{Properly embedded minimal annuli in $\mathbb{S}^2\times\mathbb{R}$}, arXiv:1210.5953.

\bibitem{hks3}\bysame, \emph{Mean convex Alexandrov embedded constant mean curvature tori in the 3-sphere}, Proc.\ London Math.\ Soc.\ {\bf 112}, 588-622 (2016).

%\bibitem{HeintzeKarcher} E.~Heintze and H.~Karcher, \emph{A general comparison theorem with applications to volume estimates for submanifolds}, Ann. Sci. \'Ecole Norm. Sup. (4) \textbf{11} (1978), no.~4, 451--470.
  
\bibitem{Hi} N.\ Hitchin, \emph{Harmonic maps from a 2-torus to the 3-sphere}, J.\ Differential Geom.\ \textbf{31} (1990), no.~3, 627--710.

\bibitem{KSS} M.~Kilian, M.U.~Schmidt and N.~Schmitt, \emph{Flows of constant mean curvature tori in the 3-sphere: the
              equivariant case}, J. Reine Angew. Math., \textbf{707}, (2015), 45--86.

%\bibitem{hoffmanwhite} D.~Hoffman and B.~White, \emph{Axial minimal surfaces in {$\bbS^2\times\bbR$} are helicoidal}, J. Differential Geom. \textbf{87} (2011), no.~3, 515--523.

%\bibitem{jackson} S.~B. Jackson, \emph{The four-vertex theorem for surfaces of constant curvature}, Amer. J. Math. \textbf{67} (1945), 563--582.

%\bibitem{Karcher} H.~Karcher, \emph{Riemannian comparison constructions}, Global differential geometry, MAA Stud. Math., vol.~27, Math. Assoc. America, Washington, DC, 1989, pp.~170--222.

%\bibitem{Kn} M.\ Knopf, \emph{Periodic solutions of the sinh-Gordon equation and integrable systems}, Doctor thesis, University of Mannheim, 2013.

\bibitem{L} P.D.\ Lax, \emph{Integrals of Nonlinear Equations of Evolutions and Solitary Waves}, Commun.\ Pure Applied Math.\ {\bf 21}, 1968, 467-490.

%\bibitem{lockhart-mcowen} R.~B. Lockhart and R.~C. McOwen, \emph{Elliptic differential operators on noncompact manifolds}, Ann. Scuola Norm. Sup. Pisa Cl. Sci. (4) \textbf{12} (1985), no.~3, 409--447.

\bibitem{MO} V.A.\ Marchenko, I.V.\ Ostrovskii: A characterisation
  of the spectrum of Hill's operator. Math.\ USSR Sbornik {\bf 26} (1975),
  493--554.


%\bibitem{mazet} L.~Mazet, \emph{A general halfspacetheorem for constant mean curvature surfaces}, American Journal Math \textbf{137} (2009), no.~8, 2761--2765.

%\bibitem{McI:tor} I.\ McIntosh, \emph{Harmonic tori and their spectral data}, Surveys on geometry and integrable systems, Adv. Stud. Pure Math., vol.~51, Math. Soc. Japan, Tokyo, 2008, pp.~285--314.

%\bibitem{MPR} W.~H. {Meeks III}, J.~P{\'{e}}rez, and A.~Ros, \emph{Properly embedded minimal planar domain}, To appear in Annals of Math.

%\bibitem{MRstable} W.~H. {Meeks III} and H.~Rosenberg, \emph{Stable minimal surfaces in {$M\times\bbR$}}, J. Differential Geom. \textbf{68} (2004), no.~3, 515--534.

%\bibitem{MR}\bysame, \emph{The theory of minimal surfaces in {$M\times\bbR$}}, Comment. Math. Helv. \textbf{80} (2005), no.~4, 811--858.

%\bibitem{No} S. P.\ Novikov, \emph{The periodic problem for the Korteweg-de Vries equation}, Funct.\ Anal.\ Appl.\ {\bf 8}, 1974, 54-66.

\bibitem{NMPZ} S.\ Novikov, S.V.\ Manakov, L.P. Pitaevskii, V.E.\ Zakharov, \emph{Theory of Solitons: The inverse scattering method}, Monographs in Contemporary Math., Springer, New York, 1984.

%\bibitem{PedRit} R.~H.~L. Pedrosa and M.~Ritor{\'e}, \emph{Isoperimetric domains in the {R}iemannian product of a circle with a simply connected space form and applications to free boundary problems}, Indiana Univ. Math. J. \textbf{48} (1999), no.~4, 1357--1394.

\bibitem{PS} U.\ Pinkall and I.\ Sterling, \emph{On the classification of constant mean curvature tori}, Ann. Math. \textbf{130} (1989), 407--451.

%\bibitem{PreS} A.~Pressley and G.~Segal, \emph{Loop groups}, Oxford Science Monographs, Oxford Science Publications, 1988.

%\bibitem{HR} H.~Rosenberg, \emph{Minimal surfaces in {${\bbM}^2\times\bbR$}}, Illinois J. Math. \textbf{46} (2002), no.~4, 1177--1195.

\bibitem{Sch} M.U.\ Schmidt, \emph{Integrable systems and Riemann surfaces of infinite genus}, Memoirs of the American Math.\ Soc.\ {\bf 581} (1996).
  
%\bibitem{Symes_80} W.~W. Symes, \emph{Systems of {T}oda type, inverse spectral problems, and representation theory}, Invent. Math. \textbf{59} (1980), no.~1, 13--51.

%\bibitem{TerU} C.~Terng and K.~Uhlenbeck, \emph{B\"{a}cklund transformations and loop group actions}, Comm. Pure and Appl. Math \textbf{LIII} (2000), 1--75.

\bibitem{teschl2012} G.\ Teschl, \emph{Ordinary differential equations and dynamical systems}, Providence, RI: American Mathematical Society (AMS), 2012.

%\bibitem{To} F.C.\ Torres, \emph{Desingularisation strategies for three-dimensional vector fields}, Lecture Notes in Math {\bf 1259}, Springer, Berlin (1987).

%\bibitem{whittaker} E.T.\ Whittaker and G.N.\ Watson,\emph{ A course of modern analysis. An introduction to the general theory of infinite processes and of analytic functions: with an account of the principal transcendental functions.} Fourth edition. Reprinted Cambridge University Press, New York 1962 vii+608
\end{thebibliography}
%%%%%%%%%%%%%%%%%%%%%%%%%%
%%%%%%%%%%%%%%%%%%%%%%%%%%
%%%%%%%%%%%%%%%%%%%%%%%%%%
%address: L. Hauswirth, Universit\'e de Marne la Vall\'ee, France.
%
%email: hauswirth@univ-mlv.fr
%
%address : M. Kilian, University College Cork, Ireland.
%
%e.mail: m.kilian@ucc.ie
%
%address: M. U. Schmidt, Universit\"at Mannheim, Germany.
%
%email: schmidt@math.uni-mannheim.de
\end{document}